\documentclass[11pt]{amsart}
\usepackage{amsfonts}
\usepackage{hyperref}
\usepackage[utf8]{inputenc}
\usepackage[T1]{fontenc}
\usepackage[dvips]{graphicx}
\usepackage{amssymb}
\usepackage{amsmath}
\usepackage{dsfont}
\usepackage{color}
\usepackage{latexsym}
\usepackage{bbm}
\usepackage{color}
\usepackage{amsthm}
\usepackage{multicol}
\usepackage[top   = 2.75cm,
bottom = 2.50cm,
left   = 2.50cm,
right  = 2.00cm]{geometry}

\bibstyle{plain}
\theoremstyle{plain}
\newtheorem{theorem}{Theorem}
\newtheorem{lemma}[theorem]{Lemma}

\newtheorem{corollary}[theorem]{Corollary}

\newtheorem{definition}[theorem]{Definition}

\theoremstyle{remark}

\newtheorem{remark}[theorem]{Remark}

\begin{document}

\title[Fractal Random Variables Related to $G_2$-representation]{Fractal Random Variables\\
  Defined by Probability Distributions of Digits\\
  of Their $G_2$-representation Having Two Bases\\
  with Different Signs}
\author[M.\ V.\ Pratsiovytyi, O.\ M.\ Baranovskyi and I.\ M.\ Lysenko and
S.\ P.\ Ratushniak]{ M.~Pratsiovytyi \and O.~Baranovskyi \and I.~Lysenko \and S.~Ratushniak}

\newcommand{\eacr}{\newline\indent}
\address{M.V. Pratsiovytyi\eacr
Institute of Mathematics of NASU,
 Dragomanov Ukrainian State University, Kyiv,
Ukraine\acr ORCID 0000-0001-6130-9413}
\email{prats4444@gmail.com}

\address{O.M. Baranovskyi\eacr
Institute of Mathematics of the National Academy of Sciences of Ukraine,
Ukraine\acr ORCID 0009-0001-4495-1778}
\email{baranovskyi@imath.kiev.ua}

\address{I.M. Lysenko\eacr
Dragomanov Ukrainian State University, Kyiv,
Ukraine
\acr ORCID 0009-0000-5299-7787}
\email{i.m.lysenko@udu.edu.ua}

\address{S.P. Ratushniak\eacr
Institute of Mathematics of NASU,
 Dragomanov Ukrainian State University, Kyiv,
Ukraine\acr ORCID 0009-0005-2849-6233}
\email{ratush404@gmail.com}

\subjclass[2020]{Primary: 60G50; Secondary: 28A80, 60J10}
\keywords{two-symbol $G_2$-representation of real numbers,
  $G_2$-cylinder,
  tail set,
  discrete probability distribution,
  singular probability distribution,
  point spectrum of probability distribution (set of atoms),
  continuous spectrum (minimal closed support),
  essential support of probability distribution,
  fractal Hausdorff--Besicovitch dimension.}

\date{\today}

\newcommand{\acr}{\newline\indent}

\begin{abstract}
In this paper, we study distributions of two random variables
  \begin{gather*}
    \tau = \tau_1 g_{1-\tau_1}
      + \sum_{k=2}^\infty \tau_k g_{1-\tau_k} \prod_{i=1}^{k-1} g_{\tau_i}
    \equiv \Delta^{G_2}_{\tau_1\tau_2...\tau_n...},\\
    \xi = \xi_1 g_{1-\xi_1}
      + \sum_{k=2}^\infty \xi_k g_{1-\xi_k} \prod_{i=1}^{k-1} g_{\xi_i}
    \equiv \Delta^{G_2}_{\xi_1\xi_2...\xi_n...},
  \end{gather*}
  where $g_0$ is a given number belonging to interval $[\frac{1}{2}; 1)$,
  $g_1\equiv g_0 - 1$,
  $(\tau_n)$ and $(\xi_n)$ are sequences of random variables
  taking the values $0$ and $1$,
  and $(\tau_n)$ is a sequence of random variables
  that form a Markov chain
  with positive initial probabilities $p_{0}$, $p_{1}$
  and matrix of transition probabilities
  \[
    \begin{pmatrix}
      p_{00} & p_{01} \\
      p_{10} & p_{11}
    \end{pmatrix},
  \]
  $(\xi_n)$ is a sequence of independent random variables
  taking the specified values with probabilities $p_{0n}$ and $p_{1n}$, respectively
  ($p_{0n}+p_{1n}=1$).
  We study structural, spectral, and fractal properties
  of distributions of $\tau$ and $\xi$.
  For random variable $\tau$, point spectrum (the set of atoms)
  and continuous spectrum (minimal closed support) of its distribution
  are studied exhaustively.
  In particular, fractal properties of the distribution are described in detail.
  We prove a theorem on the Lebesgue purity of distribution of random variable $\xi$
  (an analog of the Jessen--Wintner theorem),
  i.e., conditions for the distribution to belong to one of the types:
  pure discrete, pure absolutely continuous, and pure singular.
  A criterion of discreteness is found.
  For discrete case, a point spectrum is described
  and, for continuous case,
  a criterion of absolute continuity and singularity is established.
  If digits of $G_2$-representation of random variable $\xi$
  are identically distributed,
  we prove a mutual orthogonality of two different distributions
  and calculate fractal dimension of the essential support of distribution.
\end{abstract}

\maketitle

\section*{Introduction}

In mathematics, many various two-symbol systems of encoding
(representations) of real numbers~\cite{Pratsiovytyi:2022:DSK:en} are used.
Because of minimal alphabet such systems deserve a special attention
and are technically convenient.
There is a unique and relatively new system
with two bases having different signs among them.
This is the so-called $G_2$-representation~\cite{Pratsiovytyi:2020:GCT}.
Systems of encoding are used for various goals and roles,
particularly for development of metric and probabilistic theory of real numbers~\cite{Galambos:1976:RRN,Schweiger:1995:ETF,Pratsiovytyi:2022:CAS},
ergodic theory~\cite{Billingsley:1965:ETI,Schweiger:1995:ETF},
theory of locally complicated functions~\cite{Pratsiovytyi:2022:CAS}
(first of all, continuous nowhere monotonic and non-differentiable functions),
theory of singular probability measures~\cite{Pratsiovytyi:2005:PDR:en,Albeverio:2006:JWT,Pratsiovytyi:2022:CAS},
studying groups of transformations of space~\cite{Pratsiovytyi:2020:GCT,Pratsiovytyi:2024:UGC}, etc.
If spectrum of distribution of a random variable
(i.e., minimal closed support, or equivalently,
the set of points of growth for a probability distribution function)
or its essential support of density
is a structurally or mertically fractal set
(i.e., a scale-invariant set,
set of fractional Hausdorff--Besicovitch dimension,
anomalously fractal
or superfractal set),
then we say that the random variable and its distribution is fractal~\cite{Pratsiovytyi:1998:FPD:en}.

This paper is mostly devoted to $G_2$-representation of numbers
and its application to studying of random variables with fractal properties,
namely, random variables
defined by given distributions of digits of its $G_2$-representation.

Let us recall key notions and facts~\cite{Lysenko:2019:DSC:en,Pratsiovytyi:2020:GCT,Pratsiovytyi:2022:DSK:en}
related to $G_2$-representation of numbers that we need in the sequel.

Let $g_0$ be a given real number (parameter) from interval $[0,5;1)$,
$g_1=g_0-1$,
$A=\{0;1\}$ alphabet (set of digits),
$L=A\times A\times...$ set of all infinite sequences of zeros and ones,
$\delta_0=0$, $\delta_1=g_0$, i.e., $\delta_i=ig_{1-i}$ for all $i \in A$.

\begin{theorem}
  For any number $x\in [0;g_0]$,
  there exists a sequence $(\alpha_n)\in L$ such that
  \begin{equation}
    \label{r3}
    x = \delta_{\alpha_1}
      + \sum_{k=2}^\infty (\delta_{\alpha_k} \prod_{j=1}^{k-1} g_{\alpha_j})
    \equiv \Delta^{G_2}_{\alpha_1\alpha_2...\alpha_k...},
    \quad
    \delta_{\alpha_k}=\alpha_kg_{1-\alpha_k}.
  \end{equation}
\end{theorem}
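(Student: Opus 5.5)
We construct the digits by a greedy algorithm based on the two affine maps
\[
\varphi_0(y)=g_0 y,\qquad \varphi_1(y)=g_0+g_1 y .
\]
These are the first-digit maps of the representation: if $x=\Delta^{G_2}_{\alpha_1\alpha_2\ldots}$ and $x_1=\Delta^{G_2}_{\alpha_2\alpha_3\ldots}$, then \eqref{r3} reads $x=\delta_{\alpha_1}+g_{\alpha_1}x_1=\varphi_{\alpha_1}(x_1)$.

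First we check that both maps send $[0;g_0]$ into itself and that their images cover it. $\varphi_0$ is increasing and maps $[0;g_0]$ onto $[0;g_0^2]$. Since $g_1=g_0-1<0$, $\varphi_1$ is decreasing and maps $[0;g_0]$ onto $[g_0+g_1g_0;\,g_0]=[g_0^2;\,g_0]$, because $g_0+(g_0-1)g_0=g_0^2$. The two images are $[0;g_0^2]$ and $[g_0^2;g_0]$, so they cover $[0;g_0]$ and meet only at $g_0^2$. (The condition $g_0\ge\frac12$ is not needed for covering; it only makes $|g_1|\le g_0$.)

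Next we define the digits and remainders recursively. Put $x_0=x$. Given $x_{n-1}\in[0;g_0]$, let $\alpha_n=0$ and $x_n=x_{n-1}/g_0$ if $x_{n-1}\le g_0^2$; otherwise let $\alpha_n=1$ and $x_n=(x_{n-1}-g_0)/g_1$. By the covering step, $x_n\in[0;g_0]$ in both cases, and $x_{n-1}=\delta_{\alpha_n}+g_{\alpha_n}x_n$. Iterating $n$ times gives the exact identity
\[
x=\delta_{\alpha_1}+\sum_{k=2}^{n}\delta_{\alpha_k}\prod_{j=1}^{k-1}g_{\alpha_j}+x_n\prod_{j=1}^{n}g_{\alpha_j}.
\]

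The only real work is showing that the remainder term tends to $0$. We have $|x_n|\le g_0$, and every factor satisfies $|g_{\alpha_j}|\le q:=\max(g_0,1-g_0)=g_0<1$, using $g_0\in[\frac12;1)$. So the remainder is at most $g_0q^n\to0$. The same bound $|\delta_{\alpha_k}\prod_{j<k} g_{\alpha_j}|\le g_0q^{k-1}$ gives absolute convergence of the series in \eqref{r3}. Letting $n\to\infty$ then yields \eqref{r3} for the sequence $(\alpha_n)\in L$ just constructed, which proves existence. Non-uniqueness at the points where some $x_{n-1}=g_0^2$ is irrelevant to the statement.
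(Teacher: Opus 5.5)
Your proof is correct and complete. The covering $\varphi_0([0;g_0])\cup\varphi_1([0;g_0])=[0;g_0^2]\cup[g_0^2;g_0]$ is right, the exact $n$-step identity is right, and so is the remainder bound $\bigl|x_n\prod_{j=1}^n g_{\alpha_j}\bigr|\le g_0^{n+1}\to 0$.

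The paper gives no proof of this theorem; it is quoted from the cited literature. Your recursion $x_n=\omega(x_{n-1})$ is simply iteration of the left-shift operator $\omega$ from the paper's Remark, so you are working in the paper's own framework. Your argument is also what underlies cylinder property 2): that each $\Delta^{G_2}_{c_1\ldots c_m}$ is a whole interval presupposes exactly the surjectivity you establish here.

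As a side check, your algorithm gives $g_0=\Delta^{G_2}_{1(0)}$, which is correct. The paper's assertion $g_0=\Delta^{G_2}_{(1)}$ is a slip, since $\Delta^{G_2}_{(1)}=g_0/(1-g_1)=g_0/(2-g_0)$.
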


\begin{corollary}
  Let $g_0=0,5$.
  Then, for any number $x\in [0;\frac{1}{2}]$,
  there exists a sequence $(\alpha_n)$ of zeros and ones such that
  \begin{equation}
    \label{r2}
    x = \frac{\alpha_1}{2}
      + \sum_{k=2}^\infty \frac{\alpha_k(-1)^{\sigma_k}}{2^k}
    = \frac{\alpha_1}{2}
      + \sum_{k=2}^\infty \frac{\alpha_k}{2^{k-\sigma_k}(-2)^{\sigma_k}}
    \equiv \Delta^G_{\alpha_1\alpha_2...\alpha_n...},
  \end{equation}
  where
  $\sigma_k=\alpha_1+\alpha_2+...+\alpha_{k-1}$.
\end{corollary}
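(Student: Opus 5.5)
This is a direct specialization of the preceding theorem to $g_0=\tfrac12$. With $g_0=\tfrac12$ we have $g_1=g_0-1=-\tfrac12$, so $[0;g_0]=[0;\tfrac12]$. The theorem then gives, for every $x\in[0;\tfrac12]$, a sequence $(\alpha_n)\in L$ satisfying \eqref{r3}. The plan is to compute each term of \eqref{r3} explicitly and check that it coincides with the corresponding term of \eqref{r2}.

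First, the digit weights. Since $\delta_{\alpha}=\alpha g_{1-\alpha}$, we get $\delta_0=0$ and $\delta_1=g_0=\tfrac12$; in both cases $\delta_{\alpha}=\tfrac{\alpha}{2}$. In particular the first term is $\delta_{\alpha_1}=\alpha_1/2$.

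Next, the products $\prod_{j=1}^{k-1}g_{\alpha_j}$. Each factor equals $\tfrac12$ when $\alpha_j=0$ and $-\tfrac12$ when $\alpha_j=1$, so $g_{\alpha_j}=\tfrac{(-1)^{\alpha_j}}{2}$. Therefore
\[
\prod_{j=1}^{k-1} g_{\alpha_j}=\frac{(-1)^{\alpha_1+\dots+\alpha_{k-1}}}{2^{k-1}}=\frac{(-1)^{\sigma_k}}{2^{k-1}}.
\]
Multiplying by $\delta_{\alpha_k}=\alpha_k/2$, the $k$-th term of \eqref{r3} becomes $\dfrac{\alpha_k(-1)^{\sigma_k}}{2^k}$. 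This gives the first equality in \eqref{r2}.

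For the second equality, rewrite the denominator:
\[
2^{k-\sigma_k}(-2)^{\sigma_k}=2^{k-\sigma_k}\,2^{\sigma_k}(-1)^{\sigma_k}=2^k(-1)^{\sigma_k}.
\]
Since $(-1)^{-\sigma_k}=(-1)^{\sigma_k}$, we have $\dfrac{\alpha_k}{2^k(-1)^{\sigma_k}}=\dfrac{\alpha_k(-1)^{\sigma_k}}{2^k}$, as required. Convergence needs no separate argument: it is part of the theorem's conclusion, and in any case the terms are bounded by $2^{-k}$ in absolute value.

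There is no real obstacle here. The only points needing care are the sign bookkeeping, in particular that $\sigma_k$ counts only the digits before position $k$, and the rewriting of the denominator in the second form.
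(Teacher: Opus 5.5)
Your proposal is correct and matches the paper's argument, which just substitutes $g_0=\tfrac12$ (so $g_1=-\tfrac12$) into \eqref{r3} to obtain \eqref{r2}. You carry out that substitution explicitly: you note $\delta_{\alpha}=\alpha/2$ and $g_{\alpha_j}=(-1)^{\alpha_j}/2$, you check that the $k$-th term becomes $\alpha_k(-1)^{\sigma_k}/2^k$, and you rewrite the denominator $2^{k-\sigma_k}(-2)^{\sigma_k}=2^k(-1)^{\sigma_k}$.
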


Really, let us put $g_0=0,5$.
Then, from equalty~\eqref{r3}, we obtain equality~\eqref{r2}.

Symbolic notation $x=\Delta^{G_2}_{\alpha_1\alpha_2...\alpha_n...}$
is called the $G_2$-representation of a number $x$,
and $\alpha_n$ is called the $n$th digit of this $G_2$-representation.
Parentheses mean period in representation of a number.

Remark that relation between $G$-representation of numbers,
which is defined by equality~\eqref{r2} and is a partial case of $G_2$-representation,
and classic binary representation is expressed by equality
\[
  \Delta^{G}_{\alpha_1\alpha_2...\alpha_n...}
  = \Delta^2_{0a_1a_2...a_n...}
  = \frac{0}{2} + \sum\limits_{n=1}^\infty \frac{a_n}{2^{n+1}},
\]

$a_1 = \begin{cases}
  0 & \mbox{ if } \alpha_1=0, \\
  1 & \mbox{ if } \alpha_1=1;
\end{cases}$
\quad
$a_{n+1} = \begin{cases}
  \alpha_{n+1} & \mbox{ if } \alpha_1+...+\alpha_n \mbox{ is even}, \\
  1-\alpha_{n+1} & \mbox{ if } \alpha_1+...+\alpha_n \mbox{ is odd}.
\end{cases}$

Numbers of a countable set have two $G_2$-representations:
\[
  \Delta^{G_2}_{\alpha_1...\alpha_{m-1}01(0)}
  = \Delta^{G_2}_{\alpha_1...\alpha_{m-1}11(0)},
\]
they are called \emph{$G_2$-binary}.
Numbers that have a unique representation are called \emph{$G_2$-unary}.
Numbers $0$ and $g_0$ have a unique $G_2$-representation:
$0=\Delta^{G_2}_{(0)}$, $g_0=\Delta^{G_2}_{(1)}$.

\begin{definition}
  \emph{$G_2$-cylinder} of rank $m$ with base $c_1c_2...c_m$ is the set
  \[
    \Delta^{G_2}_{c_1c_2...c_m} = \{ x \colon
      x = \Delta^{G_2}_{\alpha_1\alpha_2...\alpha_n...}, \;
      \alpha_i=c_i, \; i = \overline{1,m}\}.
  \]
\end{definition}

$G_2$-cylinders have the following properties:

1) $\Delta^{G_2}_{c_1...c_m}=\Delta^{G_2}_{c_1...c_m0}\cup\Delta^{G_2}_{c_1...c_m1}$;

2) Cylinder
$\Delta^{G_2}_{c_1c_2...c_m}$
is an interval with endpoints
\[
  a = \delta_{c_1} + \sum_{k=2}^m (\delta_{c_k} \prod_{j=1}^{k-1} g_{c_j}),
  \quad
  b = a + g_0 \prod\limits_{j=1}^m g_{c_j};
\]

3) The length of a cylinder is equal to
$|\Delta^{G_2}_{c_1c_2...c_m}| = g_0 \prod\limits_{j=1}^{m}|g_{c_j}|$;

4) Cylinders of the same rank are not overlapping,
and the following basic metric ratio holds:
$|\Delta^{G_2}_{c_1...c_mi}| = |g_i| |\Delta^{G_2}_{c_1...c_m}|$.

5) For any sequence $(c_n)\in L$, the following equalty holds:
\[
  \bigcap_{m=1}^\infty \Delta^{G_2}_{c_1...c_m}
  = \Delta^{G_2}_{c_1...c_m...},
\]
and it is an argument to believe that a point is a cylinder of infinite rank.

By definition, put
$\nabla^{G_2}_{c_1...c_m} \equiv int \Delta^{G_2}_{c_1...c_m}
= \Delta^{G_2}_{c_1...c_m}
\setminus \{\min\Delta^{G_2}_{c_1...c_m};\max\Delta^{G_2}_{c_1...c_m}\}$.

The $G_2$-representations of numbers
$x_1=\Delta^{G_2}_{\alpha_1 \alpha_2 \cdots \alpha_n \cdots}$
and
$x_2= \Delta^{G_2}_{\beta_1 \beta_2 \cdots \beta_n \cdots}$
are said \emph{to have the same (common) tail}
if there exist positive integer numbers $k$ and $m$ such that
$\alpha_{k+j} = \beta_{m+j}$
for any $j \in N$.
It is written symbolically by $x_1\sim x_2$.

The set of all numbers from interval $[0;g_0]$ that have the same tail
is called a \emph{tail set},
i.e., a tail set is an element of the quotient set of all $G_2$-representations of numbers
by binary relation ``to have the same tail.''
Every tail set is countable and everywere dense in interval $[0;g_0]$.

\begin{remark}
  Uniqueness of $G_2$-representation of numbers
  in the family of all two-symbol representations (encodings) of real numbers
  appears in various aspects.
  In particular,

  1) Left-shift operator for digits of $G_2$-representation of numbers
  \[
    \omega(\Delta^{G_2}_{\alpha_1\alpha_2...\alpha_n...})
    \equiv \Delta^{G_2}_{\alpha_2...\alpha_n...} = \begin{cases}
      g_0^{-1}x & \mbox{if } 0<x\leq g_0^2, \\
      g_1^{-1}(x-g_0) & \mbox{if } g_0^2\leq x\leq g_0
    \end{cases}
  \]
  is a piecewise-linear function;

  2) two representations of $G_2$-binary point belong to the same tail set;

  3) inversor of digits of $G_2$-representation of numbers
  \[
    I(\Delta^{G_2}_{\alpha_1\alpha_2...\alpha_n...})
    = \Delta^{G_2}_{[1-\alpha_1][1-\alpha_2]...[1-\alpha_n]...}
  \]
  is a function discontinuous at every $G_2$-binary point.
\end{remark}

\begin{lemma}
  If
  $\zeta=\Delta^{G_2}_{\zeta_1\zeta_2...\zeta_n...}$
  a random variable that is uniformly distributed on $[0;g_0]$
  then digits $\zeta_n$ of its $G_2$-representation
  are independent identically distributed random variables with
  $P\{\zeta_n=i\}=|g_i|$.
\end{lemma}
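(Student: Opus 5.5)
The plan is to compute the probabilities of the cylinder events via property~3) of $G_2$-cylinders. For fixed $c_1,\dots,c_m\in A$, consider the event $\{\zeta_1=c_1,\dots,\zeta_m=c_m\}$. Up to the countable set of $G_2$-binary points, which has probability zero because $\zeta$ has a continuous distribution, this event coincides with $\{\zeta\in\Delta^{G_2}_{c_1\dots c_m}\}$. The ambiguity of the digits at $G_2$-binary points (two representations) therefore does not matter, whichever convention is chosen for them. Since $\zeta$ is uniform on $[0;g_0]$ and the cylinder is an interval contained in $[0;g_0]$, property~3) gives
\[
P\{\zeta_1=c_1,\dots,\zeta_m=c_m\}=\frac{|\Delta^{G_2}_{c_1\dots c_m}|}{g_0}=\prod_{j=1}^m|g_{c_j}|.
\]

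Next I would take marginals. Summing this identity over all $c_1,\dots,c_{n-1}$ and $c_{n+1},\dots,c_m$, and using $|g_0|+|g_1|=g_0+(1-g_0)=1$, yields $P\{\zeta_n=i\}=|g_i|$ for every $n$. So the digits are identically distributed, and each $\zeta_n$ is a genuine random variable because it is constant on cylinder interiors, hence Borel. Substituting these marginals back, the joint probability equals $\prod_{j=1}^m P\{\zeta_j=c_j\}$ for all $m$ and all $(c_1,\dots,c_m)$. This is exactly mutual independence of $\zeta_1,\dots,\zeta_m$ for every $m$, hence of the whole sequence.

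The only delicate point is the first step: justifying that the cylinder event and the digit event differ by a null set. I would handle it by noting that the endpoints of cylinders and the $G_2$-binary points form a countable set, and that distinct cylinders of rank $m$ do not overlap (property~4)). Together with property~1), these cylinders partition $[0;g_0]$ up to finitely many points. Everything else is direct arithmetic.
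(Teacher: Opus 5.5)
Your proposal is correct and takes essentially the same route as the paper. Both arguments rest on the cylinder-probability formula $P\{\zeta\in\Delta^{G_2}_{c_1\dots c_m}\}=g_0^{-1}|\Delta^{G_2}_{c_1\dots c_m}|=\prod_{j=1}^m|g_{c_j}|$; the paper then phrases independence as $P\{\zeta_{n+1}=i\mid\zeta\in\Delta^{G_2}_{c_1\dots c_n}\}=|g_i|$ for every prefix, which is equivalent to your factorization, and your explicit handling of the null set of $G_2$-binary points is extra care that the paper leaves implicit.
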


\begin{proof}
  Since $\zeta$ has a uniform distribution on $[0;g_0]$,
  it does not have atoms,
  i.e., $P\{\zeta=x_0\}=0$ for any $x_0\in [0;g_0]$.
  In addition, its probability distribution function is $F(x)=\frac{x}{g_0}$ and
  \[
    P\{\zeta=0\}=P\{\zeta\in\Delta_{0}^{G_2}\}=g_0^{-1}|\Delta^{G_2}_{0}|=g_0,
  \]
  \[
    P\{\zeta=1\}=P\{\zeta\in\Delta_{1}^{G_2}\}=g_0^{-1}|\Delta^{G_2}_{1}|=-g_1.
  \]
  Moreover,
  \[
    P\{\zeta\in \Delta^{G_2}_{c_1c_2...c_n}\}
    =g_0^{-1}|\Delta^{G_2}_{c_1c_2...c_n}|
    =g_0^{-1}\prod_{i=1}^{n}|g_{c_i}|.
  \]
  Then
  \[
    P\{\zeta_{n+1}=i/\xi\in\Delta^{G_2}_{c_1c_2...c_n}\}
    = \frac{P\{\zeta\in\Delta^{G_2}_{c_1...c_ni}\}}
           {P(\zeta\in\Delta^{G_2}_{c_1...c_n})}
    = \frac{g_0^{-1}|g_i|\prod\limits_{j=1}^{n}|g_{c_j}|}
           {g_0^{-1}\prod\limits_{j=1}^{n}|g_{c_j}|}=|g_i|.
  \]
  Since the last probability does not depend on choice of digits $(c_1,c_2,...,c_n)$,
  we see that $\xi_{n+1}$ does not depend on $\zeta_1$, $\zeta_2$, ..., $\zeta_n$.
  Thus, $(\zeta_n)$ is a sequence of independent random variables.
  Moreover,
  $P\{\zeta_n=0\}=g_0$, $P\{\zeta_n=1\}=-g_0$,
  and this completes the proof.
\end{proof}

\section{The First Object of Study}

Let $(\tau_n)$ be a sequence of random variables
taking the values $0$ and $1$
and forming a Markov chain
with positive initial probabilities $p_{0}$ and $p_{1}$
and matrix of transition probabilities
$\bigl(
  \begin{smallmatrix}
    p_{00} & p_{01} \\
    p_{10} & p_{11}
  \end{smallmatrix}
\bigr)$.
It is clear that we get independent random variables $(\tau_n)$
if raws of the matrix of transition probabilities are coincide.

We consider random variable
\[
  \tau = \tau_1 g_{1-\tau_1}
    + \sum_{k=2}^\infty \tau_k g_{1-\tau_k} \prod_{i=1}^{k-1} g_{\tau_i}
  \equiv \Delta^{G_2}_{\tau_1\tau_2...\tau_n...}
\]

It is certain that random variable $\tau$ is well defined.
The Lebesgue structure
(content of discrete, absolutely continuous and singular components)~\cite{Pratsiovytyi:1998:FPD:en},
spectral and fractal properties of distribution of $\tau$
are of interest to us.

It is evident that
\[
  P\{\xi = x_0 = \Delta^{G_2}_{\alpha_1\alpha_2...\alpha_n...}\}
  = p_{\alpha_1} p_{\alpha_1\alpha_2} \ldots p_{\alpha_n\alpha_{n+1}} \ldots,\]
if $x_0$ is a $G_2$-binary point, then
\begin{multline*}
  P\{\xi = \Delta^{G_2}_{\alpha_1...\alpha_{m-1}11(0)}
  = \Delta^{G_2}_{\alpha_1...\alpha_{m-1}01(0)}\} \\
  = P\{\xi = \Delta^{G_2}_{\alpha_1...\alpha_{m-1}11(0)}\}
    + P\{\xi = \Delta^{G_2}_{\alpha_1...\alpha_{m-1}01(0)}\}.
\end{multline*}

If there are two zeros among the entries of the matrix,
random variable $\tau$ has a discrete distribution with two atoms:

1) $\Delta^{G_2}_{(0)},\Delta^{G_2}_{(1)}$,
if
$p_{00}=1=p_{11}$;

2) $\Delta^{G_2}_{(01)}$, $\Delta^{G_2}_{(10)}$,
if
$p_{01}=1=p_{10}$;

3) $\Delta^{G_2}_{(0)}$, $\Delta^{G_2}_{1(0)}$,
if
$p_{00}=1=p_{10}$;

4) $\Delta^{G_2}_{0(1)}$, $\Delta^{G_2}_{(1)}$,
if
$p_{01}=1=p_{11}$.

\begin{lemma}
  Let $E$ be the set of numbers $x\in [0;g_0]$ such that pair of digits $(a,c)\in A^2$
  occurs only finitely many as consecutive digits in the $G_2$-representation.
  The set $E$ is of zero Lebesgue measure.
\end{lemma}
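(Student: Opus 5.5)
The plan is to use the preceding lemma: if $\zeta$ is uniformly distributed on $[0;g_0]$, then its $G_2$-digits $\zeta_n$ are i.i.d. with $P\{\zeta_n=i\}=|g_i|>0$. Because $\lambda(E)=g_0\,P\{\zeta\in E\}$, it suffices to prove that $P\{\zeta\in E\}=0$. The $G_2$-binary points form a countable set and have measure zero, so we may discard them. For every remaining point the representation is unique, and the event $\{\zeta\in E\}$ then becomes an event about the digit sequence: the pair $(a,c)$ occurs as $(\zeta_n,\zeta_{n+1})$ for only finitely many $n$.

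For the main step, consider the disjoint blocks $B_k=\{\zeta_{2k-1}=a,\ \zeta_{2k}=c\}$, $k\in\mathbb N$. Each $B_k$ depends on a different pair of digits, so the events are independent. Each also has the same probability
\[
q=|g_a|\,|g_c|>0 ,
\]
since $g_0\in[\tfrac12;1)$ gives $|g_1|=1-g_0\in(0;\tfrac12]$. The series $\sum_k P(B_k)$ therefore diverges. By the second Borel--Cantelli lemma, infinitely many $B_k$ occur almost surely. Hence the pair $(a,c)$ appears infinitely often as consecutive digits with probability $1$, which gives $P\{\zeta\in E\}=0$ and so $\lambda(E)=0$.

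A more elementary alternative avoids Borel--Cantelli. The set of points where $(a,c)$ never occurs in positions $(2k-1,2k)$ for $k>m$ is covered by cylinders of rank $2n$. Their total length equals $g_0$ times the probability, namely $g_0(1-q)^{n-m}\to0$. Then $E$ is contained in a countable union over $m$ of null sets.

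The only delicate point is the handling of $G_2$-binary points, which have two representations and for which membership in $E$ could depend on the representation chosen. I expect this to be the main obstacle, and it is resolved by noting that these points form a countable set and are therefore Lebesgue-null, so they can be excluded from the start.
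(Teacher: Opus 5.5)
Your proof is correct, but it takes a different route from the paper. The paper argues geometrically, with a case split. It first treats the set $C[G_2;\overline{ac}]$ of points that avoid the pair $ac$ entirely. For $ac=01$ or $10$ this set is countable. For $ac=00$ it is self-similar, $C=T_1(C)\cup T_{01}(C)$, so $\lambda(C)=(|g_1|+g_0|g_1|)\lambda(C)=(1-g_0^2)\lambda(C)$, which forces $\lambda(C)=0$; the case $11$ is analogous. Then the set $E_n$ of points avoiding $ac$ after position $n$ is a finite union of similar copies $T_{c_1\ldots c_n}(C)$, hence null, and $E=\bigcup_n E_n$. You instead combine the preceding lemma (under Lebesgue measure the $G_2$-digits are i.i.d. with $P\{\zeta_n=i\}=|g_i|$) with the second Borel--Cantelli lemma on disjoint aligned blocks. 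This treats all four pairs uniformly and needs no knowledge of the structure of the avoiding set. Your cylinder-covering variant, with total length $g_0(1-q)^{n-m}\to 0$, is fully elementary. It also disposes of $G_2$-binary points without discarding them, since whichever representation is used, its rank-$2n$ closed cylinder is among the covering ones. Your argument extends verbatim to any finite word of digits, which is the natural notion of normality here; the paper's self-similarity trick becomes awkward for longer words. In return, the paper's route yields structural information: $C[G_2;\overline{ii}]$ is a self-similar Cantor-type set. The paper reuses this in the next theorem to identify the spectrum of $\tau$ and compute its dimension from $|g_i|^x+|g_0g_1|^x=1$.
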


\begin{proof}
  1. Consider the set $C=C[G_2;\overline{ac}]$ of all numbers $x \in [0;g_0]$
  such that pair of digits $ac$ does not occurs in the $G_2$-representation.
  It is evident that
  $C\cap\nabla^{G_2}_{00}=\varnothing$.

  1) If $(a,c)=(0,1)$ or $(a,c)=(1,0)$,
  then the set $C$ is countable (see above).

  2) If $(a,c)=(0,0)$ or $(a,c)=(1,1)$,
  then $C$ is a self-similar set of Cantor type of zero Lebesgue measure.
  Now we will prove it.

  Without loss of generality, we make reasoning for the case $(a,c)=(0,0)$.
  It is evident that $C=C_1\cup C_{01}$, where
  $C_1\equiv \Delta^{G_2}_{1}\cap C$,
  $C_{01}\equiv \Delta^{G_2}_{01}\cap C$,
  and $C$ is similar to $C_1$ with ratio $|g_1|=-g_1$,
  $C$ is similar to $C_{01}$ with ratio $|g_0g_1|=-g_0g_1$.
  Then the following equalities hold for Lebesgue measure:
  \[
    \lambda(C) = \lambda(C_1) + \lambda(C_{01})
    = -g_1 \lambda(C) - g_0 g_1 \lambda(C)
    = (1 - g_0^2) \lambda(C).
  \]
  Hence $g_0^2\lambda(C)=0$, i.e., $\lambda(C)=0$.

  2. Let $E_n$ be the set of all numbers
  $x=\Delta^{G_2}_{\alpha_1...\alpha_n...}$
  such that
  $\overline{\alpha_m\alpha_{m+1}}\ne \overline{ac}$
  for $m>n$.
  Then $E_0=C[G_2;\overline{ac}]$ and $\lambda(E_0)=0$.
  Since
  \[
    E_n = \bigcup_{c_1\in A}...\bigcup_{c_n\in A} [\Delta^{G_2}_{c_1...c_m}\cap E_n]
  \]
  and we have proved already that
  $\lambda([\Delta^{G_2}_{c_1...c_m}]\cap E_n)=0$,
  we have
  $\lambda(E_n)=0$.

  Taking into account that $E=\bigcup\limits_{n=0}^{\infty}E_n$ and fact that
  union of sets of zero Lebesgue measure is a set of zero Lebesgue measure,
  we see that $\lambda(E)=0$.
\end{proof}

\begin{corollary}
  \label{cor1}
  Almost all (with respect to Lebesgue measure) numbers from interval $[0;g_0]$
  use every pair $00$, $01$, $10$, $11$ infinitely many as consecutive digits
  in the $G_2$-representation.
\end{corollary}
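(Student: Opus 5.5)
The corollary follows from the preceding lemma by taking a finite union of null sets. The plan has three steps.

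First, for each of the four pairs $(a,c)\in A^2$, namely $00$, $01$, $10$, $11$, let $E_{ac}$ denote the set $E$ of the preceding lemma built from that pair. It consists of the numbers $x\in[0;g_0]$ whose $G_2$-representation contains $ac$ as consecutive digits only finitely many times. The lemma gives $\lambda(E_{ac})=0$ for each of the four pairs.

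Second, put $E^*=E_{00}\cup E_{01}\cup E_{10}\cup E_{11}$. As a finite union of null sets, $E^*$ has $\lambda(E^*)=0$. Every $x\in[0;g_0]\setminus E^*$ uses each of the four pairs infinitely often as consecutive digits, which is exactly the assertion.

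Third, I must make sure the statement is well posed for the countable set of $G_2$-binary points, which have two representations. That set is countable, so it is Lebesgue null, and I simply add it to $E^*$. For every remaining point the representation is unique, so ``its $G_2$-representation'' is unambiguous. Alternatively, one may note that each of the two representations $\Delta^{G_2}_{\alpha_1\ldots\alpha_{m-1}01(0)}$ and $\Delta^{G_2}_{\alpha_1\ldots\alpha_{m-1}11(0)}$ ends in $(0)$. Such a point uses $01$, $10$ and $11$ only finitely often, so these points already lie in $E^*$.

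There is no real obstacle here, since the measure-theoretic work was done in the lemma. The only point needing care is the bookkeeping in the third step: the null exceptional set must also absorb the points with two representations, so that the property is stated for a well-defined representation. The argument rests entirely on countable subadditivity of $\lambda$ and on the fact that the set of $G_2$-binary points is countable.
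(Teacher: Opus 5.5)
Your proposal is correct and is essentially the paper's argument: the paper simply notes that the statement concerns $H\equiv[0;g_0]\setminus E$, which has measure $g_0$ by the preceding lemma. You make explicit two things the paper leaves implicit: the finite union of the four null sets $E_{ac}$, and the observation that the countably many $G_2$-binary points cause no ambiguity, since both of their representations end in $(0)$.
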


In fact, this statement says about the set $H\equiv [0;g_0]\setminus E$,
whose measure is equal to $g_0$.

\begin{remark}
  Corollary~\ref{cor1} expresses normal property of numbers from interval $[0;g_0]$
  in terms of $G_2$-representation.
\end{remark}

\begin{theorem}
  If matrix of transition probabilities $||p_{in}||$ contains only one zero,
  then random variable $\tau$ has

  1) a pure discrete distribution with a countable point spectrum
  \begin{equation}
    \label{eq2}
    D_\tau = \{\Delta^{G_2}_{(1-i)}, \;
      \Delta^{G_2}_{\underbrace{\scriptstyle ii...i}_k(1-i)}, \; k\in N\},
  \end{equation}
  if
  $p_{[1-i]i}=0$;

  2) singularly continuous distribution of Cantor type
  with a self-similar fractal spectrum
  \begin{align}
    \nonumber
    S_{\tau} &= C[G_2, \overline{[1-i][1-i]}] \\
    &= \{x=\Delta^{G_2}_{\alpha_1...\alpha_n...}, \;
      \alpha_k\alpha_{k+1}\ne \overline{[1-i][1-i]} \; \forall k\in N\},
  \end{align}
  whose Hausdorff--Besicovitch dimension is a solution of equation
  \begin{equation}
    \label{eq4}
    |g_i|^x+|g_0g_1|^x=1,
  \end{equation}
  if
  $p_{[1-i][1-i]}=0$.
\end{theorem}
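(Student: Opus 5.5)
We treat the two cases separately, working throughout with the trajectories of the Markov chain $(\tau_n)$. Since only one entry of the matrix vanishes, both entries in each of the other rows are positive.

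\emph{Case 1: $p_{[1-i]i}=0$.} Then $p_{[1-i][1-i]}=1$, so once the chain enters state $1-i$ it stays there forever. Also $p_{ii}<1$ because $p_{i[1-i]}>0$.

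We enumerate the trajectories with positive probability:
\begin{itemize}
\item[(a)] $(1-i)(1-i)\ldots$, with probability $p_{1-i}$;
\item[(b)] $\underbrace{i\ldots i}_k(1-i)(1-i)\ldots$ for $k\in N$, with probability $p_i p_{ii}^{k-1}p_{i[1-i]}>0$.
\end{itemize}
These probabilities sum to $p_{1-i}+p_i\sum_{k\ge1}p_{ii}^{k-1}p_{i[1-i]}=p_{1-i}+p_i=1$. Hence $\tau$ takes values in the countable set $D_\tau$ of \eqref{eq2} with probability one. Every point of $D_\tau$ has positive mass, even if some point also has a second $G_2$-representation, because its mass is at least the probability of the listed sequence. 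So the distribution is purely discrete with point spectrum $D_\tau$.

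\emph{Case 2: $p_{[1-i][1-i]}=0$.} Then $p_{[1-i]i}=1$, and $p_{ii},p_{i[1-i]}\in(0;1)$.

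\textbf{No atoms.} By the formula for $P\{\tau=x_0\}$, the mass of a unary point is an infinite product of transition probabilities. Any sequence of positive probability contains no pair $(1-i)(1-i)$, so it contains infinitely many digits $i$. Each transition out of state $i$ contributes a factor at most $\max\{p_{ii},p_{i[1-i]}\}<1$, so the product is $0$. A $G_2$-binary point has mass equal to the sum of two such zero products. Hence the distribution is continuous.

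\textbf{Spectrum.} $P\{\tau\in\Delta^{G_2}_{c_1\ldots c_m}\}=p_{c_1}p_{c_1c_2}\cdots p_{c_{m-1}c_m}$ is positive exactly when $c_1\ldots c_m$ contains no pair $(1-i)(1-i)$. A point lies in the minimal closed support iff every cylinder around it (shrinking intervals, by property 5) has positive measure. Hence the support is the image of the subshift of sequences avoiding $(1-i)(1-i)$, namely $S_\tau=C[G_2,\overline{[1-i][1-i]}]$.

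This set is closed. The coding map $L\to[0;g_0]$ is continuous, since cylinder lengths tend to zero uniformly. The subshift is compact, so its image is compact.

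Since $\tau\in S_\tau$ almost surely and $\lambda(S_\tau)=0$ by the lemma on the sets $C$ proved above, the distribution is singular.

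\textbf{Dimension.} We use the decomposition from that lemma:
\[
S_\tau=\varphi_i(S_\tau)\cup\varphi_{[1-i]i}(S_\tau),
\]
where $\varphi_c(x)=\delta_c+g_cx$ and $\varphi_{[1-i]i}=\varphi_{1-i}\circ\varphi_i$. These maps are similarities with ratios $|g_i|$ and $|g_0g_1|$; a negative sign of the ratio is harmless.

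The open set condition holds with $V=(0;g_0)$. Indeed, $\varphi_i(V)=\nabla^{G_2}_i$ and $\varphi_{[1-i]i}(V)=\nabla^{G_2}_{[1-i]i}$. These are interiors of disjoint cylinders: $\Delta_{[1-i]i}\subset\Delta_{1-i}$ and rank-one cylinders do not overlap, by property 4.

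By Moran's theorem, $\dim_H S_\tau$ is the root of $|g_i|^x+|g_0g_1|^x=1$, which is \eqref{eq4}.

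\textbf{Main obstacle.} The most delicate points are the open set condition, namely correctly identifying the images of $(0;g_0)$ under the maps with negative ratio, and the exact identification of the support when binary points have two codings. For the latter, it suffices that at least one coding of the point avoids $(1-i)(1-i)$, and the set $C$ is defined through the existence of such a representation.
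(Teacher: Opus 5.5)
Your proof is correct and follows the same route as the paper. In case 1 you sum the masses of the trajectories $(1-i)(1-i)\ldots$ and $i^k(1-i)(1-i)\ldots$; in case 2 you identify the spectrum through positivity of $p_{c_1}p_{c_1c_2}\cdots p_{c_{m-1}c_m}$ and use the same two-map decomposition with ratios $|g_i|$ and $|g_0g_1|$. Along the way you supply details the paper leaves implicit: the no-atom argument, compactness of the image of the subshift (which settles the binary-point issue), Lebesgue-nullity via the lemma, and the open set condition with $V=(0;g_0)$ needed for Moran's theorem; your exponent $p_{ii}^{k-1}$ is also the correct indexing of the atom masses.
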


\begin{proof}
  1) Let
  $p_{[1-i]i}=0$.
  Then
  $p_{[1-i][1-i]}=1$,
  and thus point
  $\Delta^{G_2}_{(1-i)}$
  is an atom with mass
  $p_{1-i}\cdot p_{[1-i][1-i]}=p_{1-i}$.

  For
  $p_{ii}\ne 0\ne p_{i[1-i]}$,
  we have
  $\Delta^{G_2}_{\underbrace{\scriptstyle ii...i}_{k}(1-i)}$
  is an atom with mass
  $p_i\cdot p_{ii}^kp_{i[1-i]}>0$,
  where
  $k\in N$.
  Moreover, sum of masses of all atoms is equal to
  \[
    p_{1-i} + p_ip_{i[1-i]} \sum_{k=0}^\infty p_{ii}^k
    = p_{1-i} + \frac{p_ip_{i[1-i]}}{1-p_{ii}}
    = p_{1-i}+\frac{p_ip_{i[1-i]}}{p_{i[1-i]}}=1.
  \]
  Hence, distribution of random variable $\tau$ is pure discrete.

  2) Since $P\{\tau=x_0\}=0$ for any $x_0\in [0;g_0]$,
  we see that distribution of $\tau$ is continuous.
  Let $p_{[1-i][1-i]}=0$.
  Then $p_{[1-i]i}=1$ and $P\{\tau=x_0\}=0$ for any $x_0\in [0;g_0]$,
  and thus $\tau$ has continuous distribution.
  Moreover,
  $P\{\tau\in \Delta^{G_2}_{i}\}=p_i>0$,
  $P\{\tau\in \Delta^{G_2}_{[1-i]i}\}=p_{1-i}\cdot p_{[1-i]i}>0$,
  $P\{\tau\in \Delta^{G_2}_{[1-i][1-i]}\}=p_{1-i}p_{[1-i][1-i]}=0$.

  Show that $S_\tau = C \equiv C[G_2, \overline{[1-i][1-i]}]$.
  Let $x\in C$, i.e.,
  $x=\Delta^{G_2}_{\alpha_1\alpha_2...\alpha_n...}$
  and
  $(\alpha_k,\alpha_{k+1})\ne (1-i,1-i)$
  for any $ k\in N$.

  Then
  \begin{gather*}
    P\{\tau\in \Delta^{G_2}_{\alpha_1\alpha_2...\alpha_m}\}
    = p_{\alpha_1}p_{\alpha_1\alpha_2}\cdot ...\cdot p_{\alpha_{m-1}\alpha_m}>0,\\
    P\{\tau\in \Delta^{G_2}_{c_1c_2...c_m}\}
    = p_{c_1}p_{c_1c_2}\cdot...\cdot p_{c_{m-1}c_m}=0,
  \end{gather*}
  if there exists
  $(c_k,c_{k+1})=(1-i,1-i)$.

  Hence, $S_{\tau}=C$ and $C$ is a self-similar set
  because of
  $C=\Delta'_i\cup \Delta'_{[1-i]i}$,
  where
  $\Delta^{G_2}_{i}\cap C=\Delta'_{i}\stackrel{k_1}{\sim}C$,
  $k_1=|g_i|$;
  $\Delta^{G_2}_{[1-i]i}\cap C\equiv \Delta'_{[1-i]i}\stackrel{k_2}{\sim}C$,
  $k_2=|g_0g_1|$.

  Equation for calculating self-similar dimension,
  which is equal to Hausdorff--Besicovitch dimension in this case,
  has a form
  $|g_i|^x+|g_0g_1|^x=1$,
  and it is obvious that its root is fractional.
\end{proof}

\begin{theorem}
  If there are no zeros among the entries of matrix of transition probabilities,
  then distribution of $\tau$ is pure continuous,
  and it is absolutely continuous only if $p_{ij}=|g_{j}|$.
  Moreover, if $p_0=g_0$,
  then it is a uniform distribution on interval $[0;g_0]$.
\end{theorem}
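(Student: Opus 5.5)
First I will establish continuity. For any $x_0=\Delta^{G_2}_{\alpha_1\alpha_2\ldots}$ the mass of the corresponding sequence is
\[
P\{\tau=x_0\}=p_{\alpha_1}\prod_{n\ge1}p_{\alpha_n\alpha_{n+1}}\le \max(p_0,p_1)\,q^{n}\quad\text{for every }n,
\]
where $q=\max_{i,j}p_{ij}<1$ because all entries are positive. Hence the mass is $0$. A $G_2$-binary point has at most two representations, so its mass is also $0$. Therefore $\tau$ has no atoms, and by the Lebesgue decomposition its distribution is a mixture of absolutely continuous and singular parts.

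Next I will prove purity. I will apply the Jessen--Wintner-type zero-one argument to the Markov chain, or more concretely compare $\mu_\tau$ with the uniform measure $\lambda$. By the Lemma on uniform $\zeta$, Lebesgue measure normalized on $[0;g_0]$ is the law of i.i.d.\ digits with $P\{\zeta_n=j\}=|g_j|$. In particular it is also a Markov measure, with $\tilde p_i=|g_i|$ and $\tilde p_{ij}=|g_j|$. Both $\mu_\tau$ and $\lambda$ are therefore shift-invariant-type Markov measures on the symbolic space, transported by the coding map, which is a bijection off a countable set. Two stationary ergodic Markov measures with positive matrices are either equal or mutually singular. This follows from the ergodic theorem: the frequency of the pair $ij$ among consecutive digits converges a.s.\ to $\pi_ip_{ij}$ under $\mu_\tau$ and to $|g_i||g_j|$ under $\lambda$. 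If $p_{ij}\ne|g_j|$ for some $(i,j)$, I compare the conditional frequency of $j$ after $i$, namely $\lim N_{ij}/N_i$. This equals $p_{ij}$ $\mu_\tau$-a.s. and $|g_j|$ $\lambda$-a.s., and these differ. The set of points where it equals $p_{ij}$ then has full $\mu_\tau$-measure and zero Lebesgue measure, so $\mu_\tau\perp\lambda$ and the distribution is pure singular. The ergodic theorem for chains applies regardless of the initial distribution, since the chain with positive matrix is irreducible and aperiodic. Also, because the conditional frequency $\lim N_{ij}/N_i$ does not depend on finitely many initial digits, the initial distribution $p_0$ is irrelevant for this dichotomy.

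Conversely, suppose $p_{ij}=|g_j|$ for all $i,j$. Then by the formula for cylinder probabilities,
\[
P\{\tau\in\Delta^{G_2}_{c_1\ldots c_m}\}=p_{c_1}\prod_{k=2}^m|g_{c_k}|=\frac{p_{c_1}}{|g_{c_1}|}\cdot g_0^{-1}\bigl|\Delta^{G_2}_{c_1\ldots c_m}\bigr| .
\]
Hence on each first-rank cylinder $\Delta^{G_2}_{i}$, $\mu_\tau$ agrees with $\lambda$ scaled by the constant $p_i/(g_0|g_i|)$ on all subcylinders. Cylinders generate the Borel sets, so $\tau$ has the piecewise constant density
\[
f(x)=\frac{p_i}{g_0|g_i|}\quad\text{on }\Delta^{G_2}_i ,
\]
and the distribution is absolutely continuous. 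If moreover $p_0=g_0$, then $p_1=1-g_0=|g_1|$, so both constants equal $g_0^{-1}$ and the distribution is uniform on $[0;g_0]$, consistent with the Lemma.

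The main obstacle is the singularity direction, which requires a rigorous frequency/ergodic statement under both measures. I would carry it out by an explicit strong law for Markov chains: the number of visits to $i$ tends to infinity a.s., and the successors after successive visits to $i$ are i.i.d.\ with law $(p_{i0},p_{i1})$ by the strong Markov property. I would then note that the exceptional set has Lebesgue measure $0$ by the same argument applied to the i.i.d.\ digits $\zeta_n$. This complements Corollary~\ref{cor1}, which guarantees that each pair occurs infinitely often Lebesgue-a.e., so the conditional frequency is defined a.e.
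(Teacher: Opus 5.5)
Your proof is correct, but it reaches singularity by a different route than the paper.

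\textbf{The paper's route.} The paper argues analytically. Since $F_\tau$ is continuous and monotone, Lebesgue's theorem gives a finite derivative on a set $W$ of full measure. On $W\cap H$, where $H$ is the set from Corollary~\ref{cor1} (every pair $00,01,10,11$ occurs infinitely often), $F'(x_0)$ equals the limit of $P\{\tau\in\Delta^{G_2}_{\alpha_1\ldots\alpha_n}\}/|\Delta^{G_2}_{\alpha_1\ldots\alpha_n}|$. That ratio is a partial product of the factors $p_{\alpha_k\alpha_{k+1}}/|g_{\alpha_{k+1}}|$. A factor different from $1$ recurs infinitely often, so the factors do not tend to $1$, and this forces $F'(x_0)=0$ almost everywhere.

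\textbf{Your route.} You separate $\mu_\tau$ from $\lambda$ by the conditional transition frequency $\lim N_{ij}/N_i$. On the $\mu_\tau$ side you use a strong law for the chain via the strong Markov property. On the Lebesgue side you use the i.i.d.\ digits of the uniform law from the Lemma. This exhibits an explicit Lebesgue-null carrier of $\mu_\tau$.

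\textbf{Comparison.} The paper's route needs less probability: only the weak normality statement of Corollary~\ref{cor1}, and no limit theorem under $\mu_\tau$. Yours needs the Markov-chain strong law but yields more:
\begin{itemize}
\item a concrete carrier of Besicovitch--Eggleston type;
\item mutual singularity of any two such distributions with different transition matrices, in the spirit of the final Corollary;
\item an actual proof of atomlessness via the bound $\max(p_0,p_1)q^n$, which the paper only asserts.
\end{itemize}

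\textbf{Sufficiency.} You also prove absolute continuity whenever $p_{ij}=|g_j|$ for all $i,j$, with piecewise constant density $p_i/(g_0|g_i|)$ on $\Delta^{G_2}_i$, even if $p_0\ne g_0$. The statement asserts only necessity, and the paper's part~2 does not cover this case. Part~2 begins by claiming that failure of the uniform conditions forces some $p_{ij}/|g_j|<1$, which is false when only $p_0\ne g_0$. Your argument is therefore the more complete one.

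\textbf{One wording issue.} Your remark that two \emph{stationary} ergodic Markov measures are equal or mutually singular does not literally apply, since $\mu_\tau$ need not be stationary. The conditional-frequency argument you actually carry out does not use stationarity, so nothing breaks.
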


\begin{proof}
  1) If
  $p_0=g_0$, $p_1=-g_1$
  and
  $p_{00}=g_0=p_{10}$, $p_{01}=-g_1=p_{11}$,
  then, for any $m\in N$ and $(c_1,...,c_m)\in A^m$, we have
  \begin{align*}
    P\{\xi\in \Delta^{G_2}_{c_1...c_m}\}
    &= p_{c_1}p_{c_1c_2}\cdot ...\cdot p_{c_{m-1}c_m}
    = |g_{c_1}g_{c_2}...g_{c_m}| \\
    &= g_0^{-1}|\Delta^{G_2}_{c_1...c_m}|
    = g_0^{-1}\lambda(\Delta^{G_2}_{c_1...c_m}),
  \end{align*}
  i.e., distribution of random variable $\xi$ is geometric,
  namely, it is uniform on $[0;g_0]$.

  2) Suppose matrix of transition probabilities
  does not fulfill above-mentioned conditions.
  Then for at least one pair of digits $ij$ inequality $p_{ij}|q_j|^{-1}<1$ holds.
  Since distribution of $\tau$ does not have atoms,
  we see that function of distribution $F_{\tau}$ is continuous and monotonic.
  Thus, by a known Lebesgue theorem, it has a finite derivative
  on the set $W\subset [0;g_0]$ of full Lebesgue measure.
  By the previous lemma, the set $H$ of ``normal numbers''
  is also a set of full Lebesgue measure.
  Thus, their intersection $W\cap H$ is a such set too.

  Consider point $x_0\in W\cap H$.
  Since derivative $F'(x_0)$ exists and is finite,
  it can be calculated by formulae:
  \begin{align*}
    F'(x_0) &= \lim_{n\to\infty}
      \frac{P\{\xi\in \Delta^{G_2}_{\alpha_1(x_0)...\alpha_n(x_0)}\}}
           {|\Delta^{G_2}_{\alpha_1(x_0)...\alpha_n(x_0)}|} \\
    &= \lim_{n\to\infty} g_0
      \frac{p_{\alpha_1}p_{\alpha_1\alpha_2}...p_{\alpha_{n-1}\alpha_n}}
           {|g_{\alpha_1}g_{\alpha_2}...g_{\alpha_n}|}
    = g_0 \frac{p_{\alpha_1}}{g_{\alpha_1}}
      \prod_{i=1}^{\infty}\frac{p_{\alpha_i\alpha_{i+1}}}{q_{\alpha_{i+1}}}.
  \end{align*}
  Since $x_0\in H$, we see that condition
  $\frac{p_{\alpha_i\alpha_{i+1}}}{|q_{\alpha_{i+1}}|}\neq 1$
  holds infinitely many times.
  Hence, condition
  $\frac{p_{\alpha_i\alpha_{i+1}}}{|q_{\alpha_{i+1}}|}< 1$
  also holds infinitely many times.
  Thus, neccessary condition of convergence of infinite product is not fulfilled.
  Hence, $F'(x_0)=0$, i.e., distribution of $\tau$ is singularly continuous.
\end{proof}

\section{The Second Object.
  The Case of Independent Digits of~$G_2$-representation}

We consider random variable
$\xi=\Delta^{G_2}_{\xi_1\xi_2...\xi_n...}$,
where $(\xi_n)$ is a sequence of random variables
taking the values $0$ and $1$ with probabilities $p_{0n}$ and $p_{1n}$, respectively.
It is clear that structure~\cite{Pratsiovytyi:1998:FPD:en,Jessen:1935:DFR} and properties of distribution of $\xi$
is determined by infinite two-row stochastic matrix $||p_{ij}||$.

\begin{theorem}
  Random variable $\xi$ has either pure discrete or pure continuous distribution.
  Moreover, it is discrete if and only if
  \begin{equation}
    \label{eq5}
    M\equiv \prod_{n=1}^{\infty}\max\{p_{0n}, p_{1n}\}>0.
  \end{equation}
  In discrete case, point spectrum (set of atoms) of the random variable
  is a tail set whose representative is a point
  $x_0=\Delta^{G_2}_{c_1...c_m...}$
  such that
  $p_{c_nn}=\max\{p_{0n}, p_{1n}\}$,
  or its subset.
\end{theorem}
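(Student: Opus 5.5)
The plan has three steps: show that the distribution has no mixed type, compute atom masses to get the discreteness criterion, and locate the atoms inside one tail set.

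\textbf{Purity.} Let $D$ be the (countable) set of atoms of $\xi$. Write $x=\Delta^{G_2}_{\alpha_1\alpha_2...}$ for any $G_2$-representation of a point $x\in D$. Consider the event $\{\xi\sim x\}$, i.e. that the digit sequence $(\xi_n)$ has the same tail as $(\alpha_n)$. Up to the fixed shift ambiguity, this is determined by $(\xi_n)_{n>k}$ for every $k$: changing finitely many digits does not change the tail class. Since each tail set is countable, this event is a countable union of events of the form $\{\xi_{k+j}=\alpha_{m+j}\ \forall j\}$. Its membership is therefore measurable with respect to the tail $\sigma$-algebra of the independent sequence $(\xi_n)$. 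By Kolmogorov's $0$--$1$ law, $P\{\xi\in T\}\in\{0,1\}$ for every tail set $T$.

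If $\xi$ has at least one atom $x_0$, then $P\{\xi\in T(x_0)\}>0$, so $P\{\xi\in T(x_0)\}=1$. Since $T(x_0)$ is countable, the distribution is pure discrete and its atoms lie in $T(x_0)$. Otherwise the distribution has no atoms and is continuous. Both representations of a $G_2$-binary point lie in the same tail set, so the nonuniqueness of representation causes no trouble here.

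\textbf{Criterion.} For a $G_2$-unary point $x=\Delta^{G_2}_{\alpha_1\alpha_2...}$, independence of the digits gives $P\{\xi=x\}=\prod_n p_{\alpha_nn}$. For a $G_2$-binary point the atom mass is the sum of two such products. In either case every atom mass is at most twice $\sup_{(\alpha_n)}\prod_n p_{\alpha_nn}=\prod_n\max\{p_{0n},p_{1n}\}=M$.

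If $M>0$, choose $c_n$ with $p_{c_nn}=\max\{p_{0n},p_{1n}\}$ and put $x_0=\Delta^{G_2}_{c_1c_2...}$. Then $P\{\xi=x_0\}\ge M>0$, so by the purity step the distribution is discrete with atoms in $T(x_0)$, which is the tail set described in the statement. Conversely, if $M=0$, every atom mass is at most $2M=0$, so there are no atoms and the distribution is continuous.

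\textbf{Point spectrum.} It remains to see which points of $T(x_0)$ are actually atoms. A point $y\in T(x_0)$ differs from $x_0$ in finitely many digits, and it is an atom exactly when none of the replaced $p_{\alpha_nn}$ vanish. Hence the spectrum is the whole tail set when all $p_{in}>0$, and a proper subset when some probabilities are $0$; this is the ``or its subset'' clause. The only point needing care is the fact that $M>0$ forces $\sum_n\min\{p_{0n},p_{1n}\}<\infty$, so the tail products converge. The main obstacle I expect is making the tail-measurability in the $0$--$1$ argument rigorous given the index shift ($k\neq m$) in the definition of common tail. I would handle it by writing $\{\xi\in T\}$ as the countable union over pairs $(k,m)$, each term being a tail event after discarding finitely many coordinates.
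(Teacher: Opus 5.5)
Your proof of the dichotomy and of the criterion is correct, but it is organized differently from the paper's. The paper does not use a $0$--$1$ law here. Assuming $M>0$, it lets $E_k$ be the set of points whose digits coincide with those of $x_0$ from position $k+1$ on, at the same positions. It computes $P\{\xi\in E_k\}=\prod_{i>k}p_{c_ii}\to 1$ and concludes that all the mass sits on the countable set $E=\bigcup_k E_k$. When $M=0$ there are no atoms, so purity falls out of the criterion itself.

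You instead establish purity first, applying Kolmogorov's $0$--$1$ law to the tail event $\{\xi\in T(x_0)\}$; the paper saves this argument for its later Jessen--Wintner-type theorem. After that you only need $x_0$ to be an atom. Your route is more structural, and your bound $2M$ handles $G_2$-binary atoms, which the paper glosses over. The paper's route is more elementary and, crucially, exhibits the carrying set $E$ explicitly.

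On the measurability point you raised: put $A_{k,m}=\{\xi_{k+j}=\alpha_{m+j}\ \forall j\ge 1\}$. Then $A_{k,m}\subseteq A_{k+1,m+1}$, so $\bigcup_{k,m}A_{k,m}=\bigcup_{k\ge n,\,m\ge 1}A_{k,m}\in\sigma(\xi_{n+1},\xi_{n+2},\ldots)$ for every $n$. The individual $A_{k,m}$ are not tail events; only their union is.

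Your final paragraph, however, contains a false claim. A point of $T(x_0)$ need not differ from $x_0$ in only finitely many digits, because a common tail allows $k\neq m$, the very shift you flagged for the $0$--$1$ law. Shifted points are in general not atoms, even when all $p_{in}>0$. Take $c_n=0$ for odd $n$, $c_n=1$ for even $n$, and $p_{c_nn}=1-2^{-n-1}$, so $x_0=\Delta^{G_2}_{(01)}$ and $M>0$. Then $y=\Delta^{G_2}_{(10)}$ is $G_2$-unary and lies in $T(x_0)$, yet $P\{\xi=y\}=\prod_n 2^{-n-1}=0$. So ``the spectrum is the whole tail set when all $p_{in}>0$'' is wrong.

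The correct description uses precisely the fact you mention. Since $M>0$ gives $\min\{p_{0n},p_{1n}\}\to 0$, any digit sequence with $\prod_n p_{a_nn}>0$ satisfies $a_n=c_n$ for all large $n$ at the same indices, and every atom has such a representation. Hence the point spectrum is exactly the paper's set $E$ of points $\Delta^{G_2}_{a_1\ldots a_kc_{k+1}c_{k+2}\ldots}$ with $p_{a_ii}>0$ for $i\le k$. This is generally a proper subset of $T(x_0)$. The paper's wording also blurs $E$ with the full tail set, but its $E$ is the right object.

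The theorem as stated is unaffected: you did show that all atoms lie in $T(x_0)$, and the statement allows a subset. Only your sharper characterization needs this correction.
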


\begin{proof}
  If distribution of $\xi$ has atoms, then there exists point
  $x=\Delta^{G_2}_{\alpha_1\alpha_2...\alpha_n...}$
  such that
  $\prod\limits_{k=1}^{\infty}p_{\alpha_kk}>0$.
  Then
  \[
    0 < P\{\xi=x\} = \prod_{k=1}^{\infty} p_{\alpha_kk} \leq M.
  \]
  Thus, if $\xi$ has atoms, then condition~\eqref{eq5} holds.

  Now suppose that condition~\eqref{eq5} holds.
  Consider any point
  $x=\Delta^{G_2}_{a_1a_2...a_n...}$
  such that its $G_2$-representation differs from representation of number $x_0$
  at most finite number of first digits
  and $p_{a_kk}>0$ for any $k\in N$.
  It is evident that such $x$ is an atom of distribution of $\xi$.

  Consider the set $E_k$ of all numbers
  $x_i^{(k)}=\Delta^{G_2}_{a_1a_2...a_k...}$
  such that their $G_2$-representation coincides with representation of $x_0$
  starting from $(k+1)$th digit,
  and $p_{a_kk}>0$ for any $k\in N$.
  Then
  \begin{align*}
    P\{\xi\in E_k\} &= \sum_{a_1: p_{a_11}>0} ... \sum_{a_k: p_{a_kk}>0}
      (p_{a_11}p_{a_22}\cdot...\cdot p_{a_kk} \prod_{i=k+1}^{\infty}p_{c_ii}) \\
    &= (\prod_{i=k+1}^{\infty} p_{c_ii})
      \sum_{a_1: p_{a_11}>0}p_{a_11}\cdot...\cdot \sum_{a_k: p_{a_kk}>0}p_{a_kk}
    = \frac{M}{p_{c_11}p_{c_22}\cdot...\cdot p_{c_kk}}.
  \end{align*}
  It is evident
  $E_0\subset E_1\subset\cdots\subset E_n\subset E_{n+1}\subset\cdots$.
  The set
  $E=\bigcup\limits_{k=1}^{\infty}E_k=\lim\limits_{k\to \infty}E_k$
  is countable of finite
  (it can be even single-point if every column of the matrix contains $1$).
  Thus,
  \[
    P\{\xi\in E\} = \lim_{k\to\infty} P\{\xi\in E_k\}
    = \lim_{k\to\infty} \frac{M}{p_{c_11}p_{c_22}...p_{c_kk}} = 1.
  \]
  Hence, distribution of $\xi$ is concentrated on at most countable set,
  so it is discrete by definition.

  Since points beolonging to set $E_k$ and number $x_0$ have a common tail,
  we see that $E$ is a tail set with representative $x_0$ or its subset
  if matrix $||p_{ik}||$ contains zeros and, for some
  $x=\Delta^{G_2}_{a_1...a_kc_{k+1}...c_{k+n}...}$
  there exists
  $p_{a_ii}=0$.

  Since $M>0$ is a neccessary condition of discreteness of $\xi$,
  we see that distribution of $\xi$ does not have atoms, so, is continuous
  if $M=0$.
\end{proof}

\section{Normal Properties of Numbers in Terms of Frequencies of~Digits
  of Their $G_2$-representation}

Let
$x=\Delta^{G_2}_{\alpha_1...\alpha_k...}$.
Put
$N_1(x,k) \equiv \alpha_1 + ... + \alpha_k$, $N_0(x,k) \equiv k - N_1(x,k)$.

\begin{definition}
  A limit
  $\lim\limits_{k\rightarrow \infty} \frac{N_i(x,k)}{k} \equiv \nu_{i}(x)$,
  $i\in A$,
  if it exists (it is not always true),
  is called a frequency of digit $i$ in the $G_2$-representation of number $x$.
\end{definition}

Statistical base of the notion of frequency is evident
(at least conditions for its existence).
However, it is known that
this notion has a deep metric meaning for various representations of real numbers.

It is clear that if frequency of digit $i\in A$ exists,
then frequency of another digit $1-i$ exists,
and vice versa, if frequency of one digit does not exist,
then frequency of another digit does not exist.
Moreover, it is easy to prove (similarly to~\cite{Pratsiovytyi:1995:SSN:en})
that set of numbers that does not have frequencies of digits
in their $G_2$-representation
is a massive null set of cardinality of continuum.
\begin{theorem}[%
  An analog of the Borel theorem~\cite{Borel:1909:PDA} for binary representation%
  ]
  Let
  $B = \{x \colon \nu_0(x) = g_0\}$
  be a set of numbers $x$ from interval $[0;g_0]$ such that
  a frequency of digit $0$ in their $G_2$-representation is equal to $g_0$.
  Then the set $B$ is of full Lebesgue measure.
\end{theorem}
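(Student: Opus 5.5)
We reduce the claim to the strong law of large numbers by means of the earlier Lemma on the uniform distribution. Let $\zeta=\Delta^{G_2}_{\zeta_1\zeta_2...\zeta_n...}$ be a random variable uniformly distributed on $[0;g_0]$. For instance, take the probability space $([0;g_0],\mathcal B,\lambda/g_0)$ with $\zeta(x)=x$, and let $\zeta_n(x)=\alpha_n(x)$ be the $n$th $G_2$-digit.

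The $G_2$-binary points form a countable set, hence a $\lambda$-null set, so the digits are defined almost everywhere without ambiguity. For every $B'\subset[0;g_0]$ we have
\[
  \lambda(B')=g_0\,P\{\zeta\in B'\}.
\]
So it suffices to show $P\{\zeta\in B\}=1$.

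By the Lemma, the digits $\zeta_n$ are independent and identically distributed, with $P\{\zeta_n=0\}=g_0$ and $P\{\zeta_n=1\}=|g_1|=1-g_0$. Put $\eta_n=\mathbf 1_{\{\zeta_n=0\}}$. Then $(\eta_n)$ are i.i.d. Bernoulli variables with mean $g_0$, and
\[
  N_0(\zeta,k)=\eta_1+\dots+\eta_k .
\]
Borel's (Kolmogorov's) strong law of large numbers gives
\[
  \frac{N_0(\zeta,k)}{k}\to g_0 \quad\text{almost surely}.
\]
Hence $\nu_0(\zeta)$ exists and equals $g_0$ with probability one, that is, $P\{\zeta\in B\}=1$. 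Therefore
\[
  \lambda(B)=g_0=\lambda([0;g_0]),
\]
so $B$ has full measure.

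The substantive step is the independence and identical distribution of the digits, which the Lemma supplies. That Lemma follows from the cylinder length formula $|\Delta^{G_2}_{c_1...c_m}|=g_0\prod|g_{c_j}|$. We also check that the event $\{\nu_0=g_0\}$ is measurable: each $\alpha_n$ is constant on the interiors of rank-$n$ cylinders, so it is a Borel function, and the limit event is Borel.

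If a self-contained argument avoiding the strong law is preferred, we would estimate the fourth moment $E\bigl(\sum_{i\le k}(\eta_i-g_0)\bigr)^4=O(k^2)$. Chebyshev's inequality and the Borel--Cantelli lemma then yield the convergence; this is the classical Borel argument transferred verbatim via the measure-preserving coding of $\lambda/g_0$ by the product Bernoulli measure.
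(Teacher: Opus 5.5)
Your proof is correct, but it takes a different route from the paper's.

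\textbf{The paper's route.} The paper never invokes the strong law. It works with Lebesgue integrals directly and computes $I_k=\int_0^{g_0}\bigl(\frac1k\sum_{i\le k}\alpha_i(x)+g_1\bigr)^2\,dx$ from the one- and two-digit identities $\lambda\{\alpha_i=1\}=g_0|g_1|$ and $\lambda\{\alpha_i=\alpha_j=1\}=g_0g_1^2$. These make the centered digits pairwise orthogonal, so $I_k=O(1/k)$. Chebyshev then gives $\lambda[E_k(\varepsilon)]=O(1/(k\varepsilon^2))$. That bound is not summable, so the paper first proves convergence along $k=n^2$ and then interpolates over $m^2\le k<(m+1)^2$. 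This is Borel's classical second-moment argument. It is self-contained and needs only pairwise uncorrelatedness of the digits, not full independence.

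\textbf{Your route.} You pass everything through the Lemma on a uniformly distributed $\zeta$, which gives full independence, and then quote the SLLN. Your fourth-moment fallback gives a summable $O(k^{-2})$ bound, so Borel--Cantelli applies along all $k$ with no subsequence step.

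\textbf{What each buys.} Your version is shorter. It treats measurability and the countable set of $G_2$-binary points explicitly, both of which the paper passes over. It also avoids explicit constants. The paper's value $g_0g_1(1-2g_0g_1)$ for the diagonal integrals is in fact negative; the correct value is $g_0^2|g_1|$, though only the order $1/k$ matters. Your argument also generalizes at once: the same proof under the product measure with $P\{\xi_n=0\}=p_0$ shows that $B[G_2;p_0]$ has full $\nu$-measure. The paper later uses that fact without proof in its dimension and singularity theorems.
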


\begin{proof}
  Using methods of calculus, we prove
  that the set of numbers $x$ such that $\nu_0(x)\neq g_0$
  (i.e., frequency of $0$ does not exist or is not equal to $g_0$)
  is of zero Lebesgue measure.

  Condition $\nu_0(x)=g_0$ is equivalent to condition $\nu_1(x)=-g_1$.
  We write it in the form
  $\lim\limits_{k\rightarrow\infty}k^{-1}(\alpha_1(x)+\ldots+\alpha_k(x))=-g_1$,
  which is equivalent to
  \begin{equation}
    \label{teor_Bor_2}
    \lim_{k\rightarrow\infty} (k^{-1} \sum_{i=1}^k \alpha_i(x) + g_1) = 0.
  \end{equation}

  Consider integrals
  \begin{align*}
    I_k &= \int^{g_0}_0 (\frac{1}{k} \sum^{k}_{i=1} \alpha_i{(x)}+g_1)^2 dx \\
    &= \frac{1}{k^2} \int_0^{g_0} (\sum_{i=1}^k \alpha_i(x) + k g_1)^2 dx \\
    &= \frac{1}{k^2} \int^{g_0}_0 (\sum^k_{i=1} (\alpha_i(x)+g_1))^2 dx.
  \end{align*}

  There are two types of integrals in expression of $I_k$.
  The first one is
  \begin{align*}
    \int^{g_0}_0 (\alpha_i(x) + g_1)^2 dx
    &= \int^{g_0}_0 (\alpha^2_i(x) + 2 \alpha_i(x) g_1 + g_1^2) dx \\
    &= \int^{g_0}_0 (\alpha_i(x) (1+2g_1) + g_1^2) dx \\
    &= (1 + 2g_1) \int^{g_0}_0 \alpha_i(x) dx + \int^{g_0}_0 g_1^2 dx \\
    &= (1 + 2g_1) \lambda\{x \colon \alpha_i(x) = 1\} g_0 + g_0 g_1^2 \\
    &= (1 + 2g_1) g_0 (-g_1) g_0 + g_1^2 g_0 = g_0 g_1 (1 - 2 g_0 g_1),
  \end{align*}
  because of $\alpha_i^2(x)=\alpha_i(x)$
  (the number of such integrals is equal to $k$).

  The second one is
  \begin{align*}
    \int^{g_0}_0 (\alpha_{i}(x) &+ g_1) (\alpha_j(x) + g_1) dx \\
    &= \int^{g_0}_0 \alpha_i(x) \alpha_j(x) dx
      + g_1 \int^{g_0}_0 (\alpha_i(x) + \alpha_j(x)) dx
      + \int^{g_0}_0 g_1^2 dx \\
    &= g_0 g_1^2 + 2 g_1 g_0 (-g_1) + g_1^2 g_0 = 0
  \end{align*}
  for
  $i\neq j$.

  Hence,
  $I_k=\dfrac{g_0g_1(1-2g_0g_1)}{k}$.
  Then
  $\lim\limits_{k\to\infty} I_k=0$,
  i.e., sequence of sums
  $\dfrac{1}{k}\sum\limits^k_{i=1}\alpha_i(x)$
  converges in quadratic mean to $(-g_1)$.
  However, convergence in mean square
  does not imply almost everywhere convergence with respect to Lebesgue measure.

  For any sufficiently small $\varepsilon > 0$,
  we consider the set $E_k(\varepsilon)$ of numbers such that
  \begin{equation}
    \label{teo_Borel_1}
    |\dfrac{1}{k}\sum\limits^k_{i=1}\alpha_i(x)+g_1|>\varepsilon.
  \end{equation}

  In order to estimate the measure of this set, remark that
  \begin{align*}
    I_k &= \int^{g_0}_0 (\dfrac{1}{k} \sum^k_{i=1} \alpha_i(x) + g_1)^2 dx
    \geq \int_{E_k(\varepsilon)} (\dfrac{1}{k} \sum^k_{i=1} \alpha_i(x) + g_1)^2 dx \\
    &\geq \varepsilon^2 \int_{E_k(\varepsilon)} dx
    = \varepsilon \lambda[E_k(\varepsilon)].
  \end{align*}
  Hence
  \[
    \lambda[E_k(\varepsilon)]
    \leq \dfrac{I_k}{\varepsilon^2}
    = \dfrac{g_0g_1(1-2g_0g_1)}{k\varepsilon^2}.
  \]

  Thus, for any fixed $\varepsilon$, equality
  $\lim\limits_{k\rightarrow\infty}\lambda[E_k(\varepsilon)]=0$
  holds.
  However, it is not enough to make decision
  that (\ref{teor_Bor_2}) holds for almost all $x\in[0;g_0]$.

  Consider sequence of sets of numbers $x(\varepsilon)$:
  $E_1(\varepsilon)$,
  $E_4(\varepsilon)$,
  $E_9(\varepsilon)$,
  ...,
  $E_{n^2}(\varepsilon)$,
  ...
  and let $F_k(\varepsilon)$ be a set of numbers
  that belong to at least one of the sets
  $E_{k^2}(\varepsilon)$, $E_{(k+1)^2}(\varepsilon)$, ...
  Taking into account relation~(\ref{teo_Borel_1}), we have
  \begin{align*}
    \lambda[F_k(\varepsilon)]
    &= \lambda [E_{k^2}(\varepsilon)\cup E_{(k+1)^2}(\varepsilon)\cup...] \\
    &\leq \sum^\infty_{j=0} \lambda[E_{(k+j)^2}(\varepsilon)]
    \leq \dfrac{g_0g_1(1-2g_0g_1)}{k\varepsilon^2}
    \sum^\infty_{j=0} \dfrac{1}{(k+j)^2} \rightarrow 0
    \quad (k\rightarrow\infty).
  \end{align*}

  Since sequence of sets $\{F_k(\varepsilon)\}$ is monotonic,
  i.e., $F_{k+1}(\varepsilon)\subset F_k(\varepsilon)$,
  and Lebesgue measure of the set $F_k(\varepsilon)$ tends to zero as $k \to \infty$,
  we see that measure of the intersection of all these sets is equal to $0$.
  It is equivalent to the fact that all numbers except for numbers from a null set
  can belong to finite collection of these sets only.
  It means if point $x$ belongs to finite number of sets $F_k(\varepsilon)$ only,
  then, for sufficiently large $k$, it does not belong to set $F_k(\varepsilon)$,
  thus, it does not belong to any of the sets $E_{n^2}(\varepsilon)$.
  So, for any number $k$ that is greater than some $k_0$, we have
  \[
    |\dfrac{1}{k^2} \sum^{k^2}_{i=1} \alpha_i(x) + g_1| \leq \varepsilon.
  \]

  Thus, this property holds for almost all numbers
  for any fixed $\varepsilon$.
  Hence, for almost all $x$, we have
  \[
    \dfrac{1}{k^2} \sum^{k^2}_{i=1} \alpha_i(x) \to -g_1
    \quad
    \text{as}
    \quad
    k \to \infty.
  \]

  We have proved the statement for the case
  if numbers $k$ increase as a sequence of squares of positive intergers.
  If $k$ is an arbitrary positive integer number,
  then there exists number $m\in N$ such that
  \[
    m^2\leq k<(m+1)^2,
    \quad
    \text{i.e.,}
    \quad
    0\leq k-m^2< 2m+1.
  \]
  Hence,
  \[
    \frac{1}{k} \sum^k_{i=1} \alpha_i(x)
    = \frac{1}{k} (\sum^{m^2}_{i=1} \alpha_i(x) + \sum^{k}_{i=m^2+1} \alpha_i(x))
    = \frac{1}{m^2} \sum_{i=1}^{m^2} \alpha_i(x) \frac{m^2}{k}
      + \frac{1}{k} \sum^{k}_{i=m^2+1} \alpha_i(x).
  \]

  As we have proved already, as $k\rightarrow\infty$,
  \[
    \dfrac{1}{m^2} \sum^{m^2}_{i=1} \alpha_i(x) \rightarrow -g_1
  \]
  almost everywhere and
  \[
    \dfrac{m^2}{(m+1)^2}<\dfrac{m^2}{k}\leq1
  \]
  everywhere. Hence,
  \[
    \dfrac{1}{k} \sum^{k}_{i=m^2+1} \alpha_i(x)
    \leq \dfrac{k-m^2}{k}
    < \dfrac{2m+1}{m^2}.
  \]
  As a result,
  $\dfrac{m^2}{k}\rightarrow 1$
  and
  $\dfrac{1}{k} \sum\limits^{k}_{i=m^2+1} \alpha_i(x) \rightarrow 0$
  as
  $k\rightarrow\infty$.
  Thus equality (\ref{teor_Bor_2}) holds almost everywhere.
\end{proof}

\begin{corollary}
  The set $B[G_2;\nu_0]\equiv\{x: \nu_0(x)=q_0\ne g_0\}$ is of zero Lebesgue measure
  if $\nu_0\ne g_0$.
\end{corollary}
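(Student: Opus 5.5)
The plan is to deduce the corollary directly from the preceding analog of the Borel theorem. Fix $q_0\ne g_0$. Any point $x$ of $B[G_2;\nu_0]$ has an existing frequency of digit $0$, and that frequency equals $q_0$. Since a limit is unique when it exists, $\nu_0(x)\ne g_0$. This gives the inclusion
\[
  B[G_2;\nu_0]\subset [0;g_0]\setminus B,
  \qquad B=\{x\colon \nu_0(x)=g_0\}.
\]
By the previous theorem, $\lambda(B)=g_0=\lambda([0;g_0])$, so $\lambda([0;g_0]\setminus B)=0$. Lebesgue measure is monotone and complete, so the subset $B[G_2;\nu_0]$ is measurable and $\lambda(B[G_2;\nu_0])=0$.

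One technical point is that the digits $\alpha_k(x)$ are well defined. Two representations exist only for the countable set of $G_2$-binary points. Whichever representation we fix for these points, the exceptional set is countable and hence Lebesgue-null, so the conclusion is unaffected. In fact both representations end in $(0)$, so they have the same frequency anyway.

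The argument rests entirely on the preceding theorem, so I expect no real obstacle. The only care needed is to read the hypothesis as "the frequency exists and equals $q_0$". The complementary set of points where the frequency does not exist is already included in $[0;g_0]\setminus B$, so it is null as well. The same inclusion argument also shows that the union over all $q_0\ne g_0$ is null.
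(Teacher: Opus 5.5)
Your proposal is correct and follows the paper's route. The paper gives no separate proof: the corollary is an immediate consequence of the Borel-type theorem, because $B[G_2;\nu_0]$ with $q_0\ne g_0$ lies in the Lebesgue-null complement of $B=\{x\colon \nu_0(x)=g_0\}$. Your remarks on $G_2$-binary points (both representations end in $(0)$) and on completeness of Lebesgue measure are correct extra care, not needed for the argument.
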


\begin{remark}
  Property of number $x\in[0;g_0]$ to have a frequency of $0$ in the $G_2$-representation
  that is equal to $g_0$ (equivalently, $\nu_1(x)=g_1)$
  is normal.
\end{remark}

\section{Continuous Random Variable
  with Independent Digits of~$G_2$-representation}

\begin{theorem}
  In continuous case ($M=0$), random variable
  $\xi=\Delta^{G_2}_{\xi_1\xi_2...\xi_n...}$
  with independent digits of its representation has either pure absolutely continuous
  or pure singular distribution (with respect ot Lebesgue measure).
  Moreover, it is absolutely continuous if and only if
  \[
    S = \sum_{n=1}^\infty [(1-\frac{p_{0n}}{g_0})^2 + (1+\frac{p_{1n}}{g_1})^2]
    < \infty.
  \]
\end{theorem}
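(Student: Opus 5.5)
We reduce the statement to Kakutani's theorem on the equivalence or mutual singularity of infinite product measures, using the Lemma on the uniform distribution. Let $\mu$ be the distribution of the sequence $(\xi_n)$ on $L=A\times A\times\cdots$; by independence, $\mu=\prod_n \mu_n$ with $\mu_n(\{i\})=p_{in}$. Let $\nu=\prod_n\nu_n$ with $\nu_n(\{i\})=|g_i|$. By the Lemma on the uniformly distributed $\zeta$, $\nu$ is exactly the distribution of the digit sequence of a random variable uniform on $[0;g_0]$. The coding map $\varphi\colon(\alpha_n)\mapsto\Delta^{G_2}_{\alpha_1\alpha_2...}$ is continuous and surjective. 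It is injective outside the countable preimage of the $G_2$-binary points. Since $M=0$, the measure $\mu$ is atomless (this is the previous theorem), and so is $\nu$. Both measures therefore give zero mass to this countable set, and $\varphi$ becomes a Borel isomorphism between full-measure sets. Hence $P_\xi=\mu\circ\varphi^{-1}$ is absolutely continuous, respectively singular, with respect to Lebesgue measure $\lambda=g_0\,\nu\circ\varphi^{-1}$ exactly when $\mu\ll\nu$, respectively $\mu\perp\nu$.

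Purity follows from the Kolmogorov $0$--$1$ law, or directly from Kakutani's dichotomy: two product measures whose factors are mutually absolutely continuous are either equivalent or mutually singular. The factors $\nu_n$ charge both points, so $\mu_n\ll\nu_n$ for every $n$. In this setting Kakutani's theorem says $\mu\ll\nu$ if and only if
\[
  \prod_{n=1}^\infty H_n>0,\qquad H_n=\sqrt{p_{0n}g_0}+\sqrt{p_{1n}|g_1|},
\]
and $\mu\perp\nu$ otherwise. If the Kakutani product is $0$, then $\mu\perp\nu$ and $P_\xi$ is singular. If it is positive, then $\mu\ll\nu$ and $P_\xi$ is absolutely continuous. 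The case $\mu_n$ degenerate for some $n$ needs no special treatment, because $\mu_n\ll\nu_n$ still holds.

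It remains to show that $\prod H_n>0$ iff $S<\infty$. Since $0<H_n\le1$, positivity of the product is equivalent to $\sum(1-H_n)<\infty$. The Hellinger distance gives
\[
  1-H_n=\tfrac12\bigl[(\sqrt{p_{0n}}-\sqrt{g_0})^2+(\sqrt{p_{1n}}-\sqrt{|g_1|})^2\bigr].
\]
Because $g_0,|g_1|$ are fixed and positive, $(\sqrt{p}-\sqrt{g})^2=(p-g)^2/(\sqrt p+\sqrt g)^2$ is bounded above and below by constant multiples of $(p-g)^2$. The constants are uniform in $p\in[0;1]$, since $\sqrt p+\sqrt g\in[\sqrt g,1+\sqrt g]$. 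Moreover $(1-p_{0n}/g_0)^2=(g_0-p_{0n})^2/g_0^2$ and $(1+p_{1n}/g_1)^2=(|g_1|-p_{1n})^2/g_1^2$. Hence $\sum(1-H_n)$ and $S$ converge or diverge together.

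The main obstacle is the transfer step: justifying rigorously that the non-injectivity of $\varphi$ on $G_2$-binary points does not affect absolute continuity or singularity. We handle it by the atomlessness argument above. Equivalently, one can work with cylinders: absolute continuity on cylinders is determined by the martingale $\prod_{k\le n}p_{\alpha_kk}/|g_{\alpha_k}|=g_0^{-1}P\{\xi\in\Delta^{G_2}_{\alpha_1...\alpha_n}\}/|\Delta^{G_2}_{\alpha_1...\alpha_n}|$, which is the Radon--Nikodym derivative on the cylinder $\sigma$-algebra. This $\sigma$-algebra generates the Borel sets of $[0;g_0]$, which lets us apply Kakutani's theorem directly to the distribution of $\xi$.
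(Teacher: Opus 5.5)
Your proof is correct, but it is organized differently from the paper's.

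\textbf{How the two proofs are built.} The paper splits the argument into two steps.
\begin{enumerate}
\item Purity comes from a Jessen--Wintner-type argument. The Kolmogorov $0$--$1$ law is applied to $\{\xi\in T(E)\}$ for a Lebesgue-null $E$, where $T(E)$ is the union of the images of $E$ under the digit-prepending similarities $T^m_\theta$.
\item The criterion is obtained by transferring to the $Q_2$-representation with $q_0=g_0$, $q_1=|g_1|$, via a Lebesgue-measure-preserving correspondence that keeps the digits, and then quoting the known $Q_2$ criterion.
\end{enumerate}
You instead stay on the symbolic space. You identify $g_0^{-1}\lambda$ with the product measure $\prod_n(g_0,|g_1|)$ through the Lemma on the uniform distribution. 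Kakutani's theorem then gives purity and the criterion at once, and the Hellinger computation turns $\prod H_n>0$ into $S<\infty$.

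\textbf{What each route buys.} Your version is self-contained: the cited $Q_2$ criterion is itself proved by essentially this Kakutani argument, so you are unpacking the reference. It also handles the non-injectivity at $G_2$-binary points explicitly. Finally, it avoids having to check that $\{\xi\in T(E)\}$ is really a tail event. As defined, $T(E)$ only prepends digits, so membership can depend on the first digits; one would have to saturate $E$ under changes of finitely many digits instead. The paper's route is shorter and reuses existing $Q_2$ theory.

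\textbf{Two minor points.}
\begin{itemize}
\item If some $\mu_n$ is degenerate, you only have $\mu_n\ll\nu_n$. What you use is then the one-sided form of Kakutani's theorem: $\mu\ll\nu$ if $\prod H_n>0$, and $\mu\perp\nu$ otherwise. This is not the ``equivalent or singular'' dichotomy you quote first. The one-sided form does hold, because the $L^2$-Cauchy argument for $\sqrt{\prod_{k\le n}\frac{d\mu_k}{d\nu_k}}$ uses only $\int\frac{d\mu_k}{d\nu_k}\,d\nu_k=1$.
\item In your closing remark on the cylinder martingale, the constant should be $g_0$ rather than $g_0^{-1}$, because $|\Delta^{G_2}_{\alpha_1\ldots\alpha_n}|=g_0\prod_{k\le n}|g_{\alpha_k}|$.
\end{itemize}
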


\begin{proof}
  Let $T_i$ be an right-shift operator for digits
  of $G_2$-representation of numbers with parameter $i$, i.e.,
  \[
    T_i(x=\Delta^{G_2}_{\alpha_1\alpha_2...\alpha_n...})
    \equiv \Delta^{G_2}_{i\alpha_1\alpha_2...\alpha_n...}.
  \]
  $T_i(E)$-transformation of a set $E$ is the set
  $T_i(E)\equiv\{x'=T_i(x), x\in E\}$.

  Let $(i_1,i_2,...,i_m)\equiv \theta$ be a fixed tuple of zeros and ones.
  Put
  \[
    T^m_{\theta}(E)=\{T_{\theta}^{m}(x)=T_{i_1}(T^{m-1}_{i_2...i_{m}}(x)), x\in E\}
  \]
  and $T_{\theta}^{m}$ is called a transformation of a set $E$.

  It is easy to see that
  $T_{\theta}^{m}([0;g_0])=\Delta^{G_2}_{i_1...i_m}$
  and $T_{\theta}^{m}$ is a similarity transformation with ratio
  $k=\prod\limits_{j=1}^{m}|g_{i_j}|$.
  Then for Lebesgue measure we have
  $\lambda[T_{\theta}^{m}(E)]=\lambda(E)\prod\limits_{j=1}^{m}|g_{i_j}|$
  and values $\lambda[T_{\theta}^m(E)]$ and $\lambda(E)$ are equal to zero
  simultaneously.

  By $T^{m}(x)$ denote the set of all images of $x$ under transformation
  $T_{\theta}^{m}(x)$,
  where $\theta$ runs the whole set $A^m$ of ordered $m$-tuple
  of zeros and ones.

  Let $E$ be some Borel set from $[0;g_0]$,
  $T^{0}(x)\equiv x$,
  and $T$ the set of all possible transformations $T^m$
  for all finite values of $m$.

  Consider event $\Lambda=\{\xi\in T(E)\}$.
  It is evident that $\Lambda$ is a residual event
  with respect ot all $\sigma$-algebras $\mathbb{B}_k$
  generated by first $\xi_1$, ..., $\xi_k$ (for any finite $k$).
  Then, by $0$--$1$ law, probability $P(\Lambda)=0$ or $P(\Lambda)=1$.

  If there exists number $a$ such that $P\{\xi=a\}>0$ exists,
  then, considering set $\{a\}$ as $E$,
  we have $P(\Lambda)=1$ and $\lambda(\Lambda)=0$,
  i.e., probability $P$ is concentrated on at most countable set $\Lambda$
  and probability distribution is pure discrete.

  If such a number $a$ does not exist, then we consider the following cases:

  1) there exists set $E$ of zero Lebesgue measure such that $P\{\xi\in E\}>0$;

  2) such a set $E$ does not exist,
  i.e., for every $E$, from $\lambda(E)=0$ it follows that $P\{\xi\in E\}=0$.

  In the first case $P\{\xi \in T(E)\}=1$ and $\lambda\{T(E)\}=0$,
  i.e., $\xi$ has a pure singular distribution;
  in the second case it has a pure absolutely continuous distribution,
  by definition.

  Since
  $\varphi(\Delta^{G_2}_{\alpha_1\alpha_2...\alpha_n...})
  =\Delta^{G_2}_{0\alpha_1\alpha_2...}$
  is a measurable mapping preserving Lebesgue measure,
  \[
    \Delta^{G_2}_{a_1a_2...a_n...}
    \equiv a_{\alpha_1}q_{1-\alpha_{a_1}}
      + \sum_{k=2}^\infty a_{\alpha_k} q_{1-\alpha_k} \prod_{i=1}^{k-1}q_{\alpha_i},
  \]
  where $q_0=g_0$, $q_1=|g_1|$,
  we see that criterion of absolutely continuity (as well as singularity)
  is a consequence of the known criterion for $Q_2$-representation of number~\cite{Pratsiovytyi:1998:FPD:en}.
\end{proof}

\section{The Case of Identically Distributed Digits}

Consider a generalization of $\alpha$-dimensional Hausdorff measure
introduced by Billingsley.
Let $\mu$ be a continuous probability measure
on $\sigma$-algebra of Borel subsets of interval.
The value of expression
\[
  H_{\mu}^{\alpha}(E)
  = \lim_{\varepsilon\to 0} \inf_{\mu(u_j)\leq\varepsilon}
    \{\sum_{j}\mu^{\alpha}(u_j)\},
\]
where infimum is taking by all at most countable
$\mu$-$\varepsilon$-coverings $\{u_j\}$ of a set $E$ by intervals $u_j$,
i.e., $\mu(u_i)\leq\varepsilon$,
is called a \emph{$H_{\mu}^{\alpha}$-Hausdorff--Billingsley measure} of the set $E$.

Non-negative number
\[
  \alpha_{\mu}(E) = \sup\{\alpha:~H_{\mu}^{\alpha}(E)=+\infty\}
  = \inf\{\alpha:~H_{\mu}^{\alpha}(E)=0\}
\]
is called a \emph{Hausdorff--Billingsley dimension} of the set $E$
with respect to measure $\mu$.

It is easy to prove that in order to determine Hausdorff--Besicovitch dimension
of Borel sets from interval $[0;g_0]$
it is enough to use coverings by $G_2$-cylinders.
Similarly, in order to determine Hausdorff--Billingsley dimension,
when $\mu$ is the probability measure
corresponding to continuous distribution of random variable
with independent identically distributed digits of its $G_2$-representation
it is also enough to use coverings by $G_2$-cylinders.

\begin{theorem}
  Hausdorff--Besicovitch dimension of the set
  $B[G_2; p_0]=\{x: \nu_0(x)=p_0\}$
  is calculated by formula
  \begin{equation}
    \label{eq8}
    \alpha_0(E)=\frac{\ln{p_0^{p_0}p_1^{p_1}}}{\ln g_0^{p_0}(-g_1)^{p_1}}.
  \end{equation}
\end{theorem}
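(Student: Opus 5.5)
The plan is the standard Billingsley argument, carried out on $G_2$-cylinders. Fix $p_0\in(0;1)$, set $p_1=1-p_0$, and let $\mu$ be the distribution of $\xi=\Delta^{G_2}_{\xi_1\xi_2...\xi_n...}$ with independent, identically distributed digits, $P\{\xi_n=0\}=p_0$. In the degenerate cases $p_0\in\{0;1\}$ the set $B[G_2;p_0]$ is handled directly, and the formula is understood with $0\ln 0=0$. When $p_0\in(0;1)$ we have $M=0$, so by the discreteness criterion above $\mu$ is continuous. For $x=\Delta^{G_2}_{\alpha_1\alpha_2...}$ the cylinder of rank $n$ containing $x$ satisfies
\[
  \mu(\Delta^{G_2}_{\alpha_1...\alpha_n}) = p_0^{N_0(x,n)}p_1^{N_1(x,n)},
  \qquad
  |\Delta^{G_2}_{\alpha_1...\alpha_n}| = g_0\, g_0^{N_0(x,n)}(-g_1)^{N_1(x,n)}.
\]
On $B[G_2;p_0]$ we have $N_i(x,n)/n\to p_i$. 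Taking logarithms and dividing by $n$ then gives, for every $x\in B[G_2;p_0]$,
\[
  \lim_{n\to\infty}\frac{\ln\mu(\Delta^{G_2}_{\alpha_1(x)...\alpha_n(x)})}{\ln|\Delta^{G_2}_{\alpha_1(x)...\alpha_n(x)}|}
  = \frac{p_0\ln p_0+p_1\ln p_1}{p_0\ln g_0+p_1\ln(-g_1)} \equiv d .
\]

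Step one is the upper bound $\alpha_0(B)\le d$. As remarked above, for both Hausdorff--Besicovitch and Hausdorff--Billingsley dimension it suffices to use coverings by $G_2$-cylinders. Fix $\varepsilon>0$. For each $x\in B$ and all large $n$ the rank-$n$ cylinder $u$ containing $x$ satisfies $|u|^{d+\varepsilon}\le\mu(u)$. Let $B_m$ be the set of $x\in B$ for which this holds for all $n\ge m$. For $n\ge m$, the rank-$n$ cylinders meeting $B_m$ form a cover of $B_m$ of mesh at most $g_0\max(g_0,-g_1)^n$, and the $(d+\varepsilon)$-sum over this cover is at most $\sum\mu(u)\le 1$. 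Hence $H^{d+\varepsilon}(B_m)<\infty$, so $\dim B_m\le d+\varepsilon$. Since $B=\bigcup_m B_m$ and dimension is countably stable, $\dim B\le d$.

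Step two is the lower bound, for which I would use the mass distribution principle. By the strong law of large numbers for the i.i.d.\ digits $\xi_n$ (or by repeating the proof of the Borel-type theorem with $\mu$ in place of $\lambda$), $\mu(B)=1$. Fix $\varepsilon>0$. By Egorov's theorem there is a set $B'\subset B$ with $\mu(B')>1/2$ on which $\mu(u)\le|u|^{d-\varepsilon}$ holds for every cylinder $u$ of rank $n\ge m$ that contains a point of $B'$. Now take any cover of $B'$ by cylinders of sufficiently small length, hence of large rank, and discard the cylinders not meeting $B'$. Then
\[
  \sum|u_j|^{d-\varepsilon}\ge\sum\mu(u_j)\ge\mu(B')>\tfrac12 ,
\]
so $H^{d-\varepsilon}(B')>0$ and therefore $\dim B\ge d-\varepsilon$.

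The main obstacle is justifying that cylinder coverings suffice for Hausdorff--Besicovitch dimension. The paper only asserts this; it holds because any interval $I$ is covered by a bounded number of $G_2$-cylinders of length comparable to $|I|$, the comparability ratio being controlled by $\min(g_0,-g_1)$. A further subtlety is the $G_2$-binary points: they are countable, so they do not affect either $\mu$ or the dimension. Once these two points are in place, steps one and two together give $\alpha_0(B[G_2;p_0])=d$, which is formula~\eqref{eq8}.
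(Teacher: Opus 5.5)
Your proposal is correct and follows the paper's route: the same limit of $\ln(\text{cylinder measure})/\ln(\text{cylinder length})$ on $B[G_2;p_0]$, combined with the fact that this set has full measure for the i.i.d.\ digit distribution and that $G_2$-cylinder coverings suffice. The only difference is that the paper cites a Billingsley-type comparison theorem without proof, while you prove the two inequalities it gives here: a cylinder covering bound for the upper estimate, and the mass distribution principle with Egorov's theorem for the lower one.
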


\begin{proof}
  Let $\Delta_n(x)$ be a $G_2$-cylinder of rank $n$ containing number $x$.
  We will use the following theorem
  generalizing the Billingsley theorem~\cite{Billingsley:1965:ETI}
  on determination of Hausdorff--Besicovitch dimension
  using their coverings by binary cylinders:
  ``Let $\mu$ and $\nu$ be continuous probability measures on $[0;g_0]$
  such that all $G_2$-cylinders are enough for determination
  of Hausdorff--Billingsley dimension of sets from this interval
  Then if
  \[
    E\subset \{x: \lim_{n\to\infty}\frac{\ln\nu(\Delta_n(x))}{\ln\mu(\Delta_n(x))}\}
    = \delta,
  \]
  we have $\alpha_{\mu}(E)=\delta\cdot\alpha_{\nu}(E)$.''
  To this end we take $\mu=g_0^{-1}\lambda$, where $\lambda$ is Lebesgue measure,
  and $\nu$ be a probability measure of Borel sets from interval $[0;g_0]$
  defined on $G_2$-cylinders by equality
  $\nu(\Delta^{G_2}_{\alpha_1\alpha_2...\alpha_n})=\prod\limits_{i=1}^{n}p_{\alpha_i}$.
  Since
  \[
    \nu(\Delta^{G_2}_{\alpha_1\alpha_2...\alpha_n})=p_0^{N_0(x,n)}p_1^{N_1(x,n)},
    \mbox{ where }
    N_1(x,n)=\alpha_1(x)+...+\alpha_n(x);
  \]
  \[
    \mu(\Delta^{G_2}_{\alpha_1\alpha_2...\alpha_n})=g_0^{1+N_0(x,n)}|g_1|^{N_1(x,n)},
    \mbox{ where }
    N_1(x,n)=n-N_0(x,n),
  \]
  we have
  \begin{align*}
    \lim_{n\to\infty}\frac{\ln\nu(\Delta^{G_2}_{\alpha_1\alpha_2...\alpha_n})}
    {\ln\mu(\Delta^{G_2}_{\alpha_1\alpha_2...\alpha_n})}
    &= \lim_{n\to\infty}\frac{\ln p_0^{N_0(x,n)}p_1^{N_1(x,n)}}
      {\ln g_0^{1+N_0(x,n)}|g_1|^{N_1(x,n)}} \\
    &= \lim_{n\to\infty}\frac{\ln\left(p_0^{\frac{N_0(x,n)}{n}}\cdot p_1^{\frac{N_1(x,n)}{n}}\right)^n}
      {\ln\left(g_0^{\frac{1+N_0(x,n)}{n}}\cdot |g_1|^{\frac{N_1(x,n)}{n}}\right)^n}
    = \frac{\ln p_0^{p_0}p_1^{p_1}}{\ln g_0^{p_0}|g_1|^{p_1}}.
  \end{align*}
  Since the set $B[G_2;p_0]$ is a set of full measure $\nu$,
  we have~\eqref{eq8}.
\end{proof}

\begin{theorem}
  If digits $(\xi_n)$ of continuous random variable
  $\hat{\xi}=\Delta^{G_2}_{\xi_1\xi_2...\xi_n...}$
  are independent and identically distributed,
  i.e., $p_{ik}=p_i$, $i=0,1$,
  then distribution of $\hat{\xi}$ is singularly continuous,
  and essential support of distribution
  is a fractal set of Besicovitch--Eggleston type $B[G_2;p_0]$
  of Hausdorff--Besicovitch dimension
  $\alpha_0(B)=\frac{\ln{p_0^{p_0}p_1^{p_1}}}{\ln{g_0^{p_0}(-g_1)^{p_1}}}$.
\end{theorem}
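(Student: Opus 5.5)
The plan is to combine the two results just proved: the strong law of large numbers, which yields the essential support, and the preceding dimension theorem for $B[G_2;p_0]$. First I record what the hypotheses give. Continuity means $M=\prod\max\{p_0,p_1\}=0$, so $0<p_0<1$. The digits $\xi_n$ are i.i.d. with $P\{\xi_n=0\}=p_0$. Hence the distribution $\nu$ of $\hat{\xi}$ satisfies $\nu(\Delta^{G_2}_{c_1\dots c_n})=p_0^{N_0}p_1^{N_1}$. By Kolmogorov's strong law of large numbers (or by the same second-moment argument as in the analog of the Borel theorem, with $\lambda$ replaced by $\nu$), $\frac1k\sum_{i\le k}\xi_i\to p_1$ almost surely. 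Thus $\nu(B[G_2;p_0])=1$, and $B[G_2;p_0]$ is a support of full $\nu$-measure. It is essential because every Borel subset of it of full $\nu$-measure has the same dimension, by the Billingsley-type argument below.

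Second, I prove singularity when $p_0\ne g_0$. One route is the absolute continuity criterion of the theorem on Lebesgue purity. With $p_{0n}=p_0$ for all $n$, the terms $(1-p_0/g_0)^2+(1+p_1/g_1)^2$ are a positive constant, so $S=\infty$ and the distribution is purely singular. A more direct route uses the Corollary to the Borel analog: $\lambda(B[G_2;p_0])=0$ whenever $p_0\ne g_0$. So $\nu$ is concentrated on a Lebesgue null set. Continuity was already assumed, so $\nu$ is singularly continuous. If $p_0=g_0$, the distribution is uniform by the Lemma on uniformly distributed $\zeta$. The statement must therefore tacitly exclude this case, and I will note that $p_0\neq g_0$ is needed.

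Third, the dimension $\frac{\ln p_0^{p_0}p_1^{p_1}}{\ln g_0^{p_0}(-g_1)^{p_1}}$ is exactly the preceding theorem applied to $B[G_2;p_0]$. It is fractional, i.e.\ strictly less than $1$, by the Gibbs inequality $\sum p_i\ln p_i>\sum p_i\ln|g_i|$ for $(p_0,p_1)\ne(g_0,|g_1|)$. Both logarithms are negative, so the ratio lies in $(0,1)$.

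The main obstacle is showing that no smaller full-measure set has lower dimension, which is what makes the support essential. For this I apply the Billingsley-type theorem to an arbitrary Borel $E\subset B$ with $\nu(E)=1$. On $B$ the ratio $\ln\nu(\Delta_n(x))/\ln\mu(\Delta_n(x))$ tends to the constant $\delta$, so $\alpha_\mu(E)=\delta\,\alpha_\nu(E)$. Moreover $\alpha_\nu(E)=1$ whenever $\nu(E)>0$, since $H^\alpha_\nu(E)\ge\nu(E)>0$ for $\alpha<1$. This requires that $G_2$-cylinder coverings suffice for $\nu$, which the paper asserts for i.i.d.\ digits.
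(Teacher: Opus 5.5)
Your proposal is correct and follows the same route as the paper: $B[G_2;p_0]$ has full $\nu$-measure by the law of large numbers, it is Lebesgue-null for $p_0\neq g_0$ by the corollary to the Borel analog, so the distribution is singular, and the dimension comes from the preceding theorem. Your additions fill in points the paper leaves implicit: the caveat that $p_0\neq g_0$ is required (the statement omits it, though the paper's proof assumes it), the Gibbs-inequality argument for $0<\alpha_0(B)<1$, and the Billingsley argument that every full-measure subset of $B[G_2;p_0]$ has the same dimension, which is what justifies calling it essential.
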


\begin{proof}
  The set $B[G_2;p_0]$ is a set of full probability measure $\nu$
  corresponding to random variable $\hat{\xi}$.
  However, for $p_0\neq g_0$, the set $B[G_2;p_0]$ is of zero Lebesgue measure.
  Hence, distribution of $\hat{\xi}$ is singular.

  Since, for $p_0\neq g_0$, Hausdorff--Billingsley of the set $B[G_2;p_0]$
  fulfill inequality
  \[
    0 < \alpha_0(B)
    = \frac{\ln{p_0^{p_0}(1-p_0)^{1-p_0}}}{\ln{g_0^{p_0}(-g_1)^{1-p_0}}}
    < 1,
  \]
  we see that distribution of random variable $\hat{\xi}$
  is a fractal singular distribution of Salem type.
\end{proof}

\begin{corollary}
  Different distribution of random variables $\hat{\xi}$ and $\hat{\xi}'$
  such that their $G_2$-digits are independent and identically distributed
  are mutually orthogonal.
\end{corollary}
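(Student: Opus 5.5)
The plan is to separate the two distributions by their essential supports, namely the frequency sets $B[G_2;p_0]$ and $B[G_2;p_0']$. Let $\hat\xi$ have i.i.d.\ $G_2$-digits with $P\{\xi_n=0\}=p_0$, and $\hat\xi'$ with $P\{\xi'_n=0\}=p_0'$. Since the distributions differ, $p_0\neq p_0'$: the distribution is determined by its values on $G_2$-cylinders, and these are $p_0^{N_0}p_1^{N_1}$, so equal values of $p_0$ would give equal measures. We may assume both are continuous. This is the standing assumption of the preceding theorem; alternatively, if one of them is discrete, its atoms form a single point or a tail set, and the argument below still works.

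The key step is the strong law of large numbers for the digits. The $\xi_n$ are i.i.d.\ Bernoulli random variables, so with probability one
\[
\frac{N_0(\hat\xi,k)}{k}\to p_0 .
\]
There is one technicality here. The digit sequence $(\xi_n)$ generates $\hat\xi$, but a $G_2$-binary point has two representations, so the frequency $\nu_0(x)$ must be well defined. In the continuous case this causes no problem: the set of $G_2$-binary points is countable and carries zero probability. Moreover, the two representations of such a point differ only in finitely many digits and then share a tail, so the frequency does not depend on the choice of representation anyway. Hence $P\{\hat\xi\in B[G_2;p_0]\}=1$, and likewise $P\{\hat\xi'\in B[G_2;p_0']\}=1$. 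This is exactly the statement used in the proof of the preceding theorem, that $B[G_2;p_0]$ has full $\nu$-measure.

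Next, the sets $B[G_2;p_0]$ and $B[G_2;p_0']$ are disjoint, because a limit frequency is unique. They are also Borel sets, since each $\alpha_i(x)$ is a Borel function constant on the interiors of cylinders. Therefore the measure $\mu_{\hat\xi}$ is concentrated on $B[G_2;p_0]$, and $\mu_{\hat\xi'}$ assigns zero measure to this set, as it is concentrated on the disjoint set $B[G_2;p_0']$. This is precisely mutual singularity, i.e.\ orthogonality $\mu_{\hat\xi}\perp\mu_{\hat\xi'}$.

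I expect the only delicate point to be the well-definedness of $\nu_0$ on $G_2$-binary points together with the Borel measurability of $B$. Both are handled by the tail argument and by writing $B$ as a countable combination of unions of cylinders, through the usual $\forall\varepsilon\,\exists K\,\forall k\ge K$ description of the limit.
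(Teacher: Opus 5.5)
Your proposal is correct and takes the same route as the paper. The paper leaves this corollary as an immediate consequence of the preceding theorem: each distribution is concentrated on its Besicovitch--Eggleston set $B[G_2;p_0]$, and these sets are pairwise disjoint for distinct $p_0$. Your version fills in what the paper omits: the strong law of large numbers for the digits, why $G_2$-binary points cause no trouble, Borel measurability, and the observation that the argument also covers the uniform case $p_0=g_0$.
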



\end{document}